\documentclass[12pt]{amsart}
\usepackage{amsfonts,amsthm,latexsym,amsmath,amssymb,amscd,amsmath, mathrsfs, url}
\newcounter{count}

\numberwithin{count}{section}
\newtheorem{Lemma}[count]{Lemma}

\newtheorem{Theorem}[count]{Theorem}
\newtheorem{Conjecture}[count]{Conjecture}

\begin{document}

\author[T. H.~Nguyen]{Thu Hien Nguyen}

\address{Partially supported by the Akhiexer Foundation. Department of Mathematics \& Computer Sciences, V. N. Karazin Kharkiv National University,
4 Svobody Sq., Kharkiv, 61022, Ukraine}
\email{nguyen.hisha@gmail.com }

\title[Entire functions of the Laguerre-P\'olya class]
{On the conditions for a special entire function relative to the
partial theta-function and the Euler function to belong to the 
Laguerre-P\'olya class }

\begin{abstract}
In this paper, we discuss the conditions for the function $F_a(z) = \sum_{k=0}^\infty \frac{z^k}{(a+1)(a^2+1) 
\ldots (a^k+1)}, a>1,$ to belong to the Laguerre-P\'olya class, or to have only real zeros.

\end{abstract}

\keywords {Laguerre-P\'olya class; entire functions of order zero; real-rooted 
polynomials; multiplier sequences; complex zero decreasing sequences}

\subjclass{30C15; 30D15; 30D35; 26C10}

\maketitle

\section{Introduction}

{\bf Definition 1}.  A real entire function $f$ is said to be in the {\it
Laguerre-P\'olya class}, written $f \in \mathcal{L-P}$, if it can
be expressed in the form
\begin{equation}
\label{lpc}
 f(x) = c x^n e^{-\alpha x^2+\beta x}\prod_{k=1}^\infty
\left(1-\frac {x}{x_k} \right)e^{xx_k^{-1}},
\end{equation}
where $c, \alpha, \beta, x_k \in  \mathbb{R}$, $x_k\ne 0$,  $\alpha \ge 0$,
$n$ is a nonnegative integer and $\sum_{k=1}^\infty x_k^{-2} < \infty$. As usual, the product on the right-hand side can be
finite or empty (in the latter case the product equals 1).

This class is essential in the theory of entire functions due to the fact that  the polynomials 
with only real zeros converge locally uniformly to these and only these 
functions. The following  prominent theorem 
states an even  stronger fact. 

{\bf Theorem A} (E.Laguerre and G.P\'{o}lya, see, for example,
\cite[p. 42-46]{HW}). {\it   

(i) Let $(P_n)_{n=1}^{\infty},\  P_n(0)=1, $ be a sequence
of complex polynomials having only real zeros which  converges uniformly in
the circle $|z|\le A, A > 0.$ Then this sequence converges
locally uniformly to an entire function
 from the $\mathcal{L-P}$ class.

(ii) For any $f \in \mathcal{L-P}$ there is a
sequence of complex polynomials with only real zeros which
converges locally uniformly to $f$.}

For various properties and characterizations of the
Laguerre-P\'olya class see 
\cite[p. 100]{pol}, \cite{polsch}  or  \cite[Kapitel II]{O}.

Note that for a real entire function (not identically zero) of order less than $2$ having 
only real zeros is  equivalent to belonging to the Laguerre-P\'olya class. The situation is 
different when an entire function is of order $2$. For example, the function 
$f_1(x)= e^{- x^2}$ belongs to the Laguerre-P\'olya class, but the function 
$f_2(x)= e^{x^2}$ does not.

Let  $f(z) = \sum_{k=0}^\infty a_k z^k$  be an entire function with positive coefficients. 
We define the quotients $p_n$ and $q_n$:

\begin{eqnarray}
\label{qqq} &  p_n=p_n (f):=\frac {a_{n-1}}{a_n},\ n\geq 1;
\\
\nonumber &
q_n=q_n(f):=\frac {p_n}{p_{n-1}}=\frac {a_{n-1}^2}{a_{n-2}a_n},\
n\geq 2.
\end{eqnarray}

The following formulas can be verified by straightforward calculation.

\begin{eqnarray}
\label{defq}
& a_n=\frac {a_0}{p_1 p_2 \ldots p_n},\ n\geq 1\ ; \\
\nonumber & a_n=\frac
{a_1}{q_2^{n-1} q_3^{n-2} \ldots q_{n-1}^2 q_n} \left(
\frac{a_1}{a_0} \right) ^{n-1},\ n\geq 2.
\end{eqnarray}

In 1926, J. I. Hutchinson found the following sufficient condition for an entire function 
with positive coefficients to have only real zeros.

{\bf Theorem B} (J. I. Hutchinson, \cite{hut}). { \it Let $f(z)=\sum_{k=0}^\infty a_k z^k$, $a_k > 0$ for all $k$. 
Then $q_n(f)\geq 4$, for all $n\geq 2,$  
if and only if the following two conditions are fulfilled:\\
(i) The zeros of $f(z)$ are all real, simple and negative, and \\
(ii) the zeros of any polynomial $\sum_{k=m}^n a_kz^k$, $m < n,$  formed 
by taking any number 
of consecutive terms of $f(z) $, are all real and non-positive.}

For some extensions of Hutchinson's results see,
for example, \cite[\S4]{cc1}. 

The entire function $g_a(z) =\sum _{j=0}^{\infty} z^j a^{-j^2}$, $a>1,$
a so-called \textit{partial theta-function},  was investigated in the paper \cite{klv}. 
Simple calculations show that $q_n(g_a)=a^2$ for all $n.$ 

The  survey \cite{War} by S.O. Warnaar contains the history of
investigation of the partial theta-function and its interesting properties.

In \cite{klv} it is shown that  for every $n\geq 2$ there exists a constant $c_n >1$ such 
that  $S_{n}(z,g_a):=\sum _{j=0}^{n} z^j a^{-j^2} \in \mathcal{L-P}$
$ \Leftrightarrow \ a^2 \geq c_n$.

{\bf Theorem C} (O. Katkova, T. Lobova, A. Vishnyakova, \cite{klv}).  {\it There exists a constant 
$q_\infty $ $(q_\infty\approx 3{.}23363666 \ldots ) $ such that:
\begin{enumerate}
\item
$g_a(z) \in \mathcal{L-P} \Leftrightarrow \ a^2\geq q_\infty ;$
\item
$g_a(z) \in \mathcal{L-P} \Leftrightarrow \ $  there exists $x_0 \in (- a^3, -a)$ such that $ \  g_a(x_0) \leq 0;$
\item
for a given $n\geq 2$ we have $S_{n}(z,g_a) \in \mathcal{L-P}$ $ \ \Leftrightarrow \ $
there exists $x_n \in (- a^3, -a)$ such that $ \ S_n(x_n,g_a) \leq 0;$
\item
$ 4 = c_2 > c_4 > c_6 > \ldots $  and    $\lim_{n\to\infty} c_{2n} = q_\infty ;$
\item
$ 3= c_3 < c_5 < c_7 < \ldots $  and    $\lim_{n\to\infty} c_{2n+1} = q_\infty .$
\end{enumerate}}

There is a series of works by V.P. Kostov dedicated to the interesting properties of zeros of the 
partial theta-function and its  derivative (see \cite{kos0}, \cite{kos1}, \cite{kos2}, \cite{kos3}, \cite{kos03},
\cite{kos04}, \cite{kos4}, \cite{kos5} and \cite{kos5.0}). For example, in  \cite{kos1}, V.P.~Kostov studied 
the so-called spectrum of the partial theta function, i.e. the set of values of 
$a>1$ for which the function  $g_a $ has a multiple real zero. 

A wonderful paper \cite{kosshap} among the other results explains the role 
of the constant $q_\infty $  in the study of the set of entire functions with positive 
coefficients having all Taylor truncations with only real zeros. 

{\bf Theorem D} (V.P. Kostov, B. Shapiro, \cite{kosshap}).
{\it Let $f(z) = \sum_{k=0}^\infty a_k z^k$ be an entire function 
with positive coefficients and $S_n(z) = \sum_{j=0}^n a_j z^j$ be its sections.
Suppose that there exists  $ N\in {\mathbb{N}},$ such that for all  $  n \geq N$ the sections
$S_n(z) = \sum_{j=0}^n a_j z^j$ belong to the Laguerre-P\'olya class.
Then $\lim\inf_{n\to \infty} q_n(f) \geq q_\infty$.
}

In \cite{klv1}, some  entire functions  with a convergent sequence of second quotients 
of coefficients  are investigated. The main question of \cite{klv1} is whether a function 
and its Taylor sections belong to the Laguerre-P\'olya class. In \cite{BohVish} and 
\cite{Boh}, some important special functions with increasing sequence of second quotients 
of Taylor coefficients  are studied. 

In \cite{ngthv1} and \cite{ngthv2}, the sufficient and necessary conditions were found for 
some entire functions of order zero to belong to the Laguerre-P\'olya class.

We  have studied the entire functions with positive Taylor coefficients such that 
$q_n(f)$ are decreasing in $n.$

{\bf Theorem  E} (T. H. Nguyen, A. Vishnyakova, \cite{ngthv1}).
{\it Let $f(z)=\sum_{k=0}^\infty a_k z^k $, $a_k > 0$ for all $k$, be an 
entire function.  Suppose that $q_n(f)$ are decreasing in $n$, i.e.  $q_2 \geq q_3 \geq 
q_4 \geq \ldots, $  and  $\lim\limits_{n\to \infty} q_n(f) = b \geq q_\infty$. 
Then all the zeros of $f$ are  real and negative, in other words  $f \in \mathcal{L-P}$.}

It is easy to see that, if only the estimation of $q_n(f)$ from below is given
and the assumption of monotonicity is omitted, then the constant $4$ in $q_n (f) \geq 4 $ 
is the smallest possible to conclude that $f \in \mathcal{L-P}$. 

We have also investigated the case when $q_n(f)$ are increasing in $n$ and have obtained the following theorem.

{\bf Theorem F} (T. H. Nguyen, A. Vishnyakova, \cite{ngthv2}).
{\it Let $f(z)=\sum_{k=0}^\infty a_k z^k $, $a_k > 0$ for all $k,$  be an 
entire function.  Suppose that the quotients $q_n(f)$ are increasing in $n$, 
and  $\lim\limits_{n\to \infty} q_n(f) = c < q_\infty$. 
Then the function $f$ does not belong to the  Laguerre-P\'olya class.}

A well known function 
$$h_a(z) = \sum \limits_{k=0}^\infty \frac{z^k}{(a^k -1)(a^{k-1} - 1)\cdot \ldots \cdot (a-1)} = 
\prod \limits_{k=1}^\infty \left(1 + \frac{z}{a^k} \right),\  a > 1,$$ 
has only real negative zeros. 

In this paper, we study the following function
$$F_a(z) = \sum_{k=0}^\infty \frac{z^k}{(a^k+1)(a^{k-1}+1)\cdot \ldots \cdot (a+1)},$$ 
and we want to find out, for which $a>1$ this function belongs to the Laguerre-P\'olya class. 
This problem was posed in the problem list of the workshop ``Stability, hyperbolicity, and zero 
localization of functions'' (American Institute of Mathematics, Palo Alto, California, 2011, see 
\cite[Problem 8.2]{AIM}). We have  $q_n(F_a) = \frac{a_{n-1}^2}{a_{n-2}a_n} = \frac{a^n +1}{a^{n-1}+1}$ 
and $\lim\limits_{n\to \infty} q_n(F_a)= a.$ It is easy to see that the second quotients of $F_a$ 
are increasing in $n$:
$\frac{a^n+1}{a^{n-1}+1} < \frac{a^{n+1}+1}{a^n+1}$ is equivalent to
$(a^n+1)^2 < (a^{n-1}+1)(a^{n+1}+1),$ or $2a < a^2 + 1.$
Thus, $q_n(F_a) < q_{n+1}(F_a)$ for all $n \geq 2.$ 

We are going to prove the two following theorems.

\begin{Theorem}
\label{th:mthm1}
The entire function $F_a(z) = \sum_{k=0}^\infty \frac{z^k}{(a^k+1)(a^{k-1}+1)\cdot \ldots \cdot (a+1)},$ 
$a>1,$ belongs to the Laguerre-P\'olya class if and only if there exists $z_0 \in (- (a^2+1), - (a+1))$ such that 
$F_a(z_0) \leq 0$.
\end{Theorem}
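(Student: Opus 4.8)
The plan is to first record the functional equation hidden in the definition of $F_a$. Since $a_k=a_{k-1}/(a^k+1)$ for $k\ge 1$ (and $a_0=1$), summing $\sum_k a_k(a^k+1)z^k$ in the two obvious ways gives
\[
F_a(az)=2+(z-1)F_a(z).
\]
From this: $p_n(F_a)=a^n+1$, so the interval in the statement is exactly $(-p_2(F_a),-p_1(F_a))$; also $F_a(a)=2$, $F_a(a\zeta)=2$ at every zero $\zeta$ of $F_a$, and, for real $z<1$, $\sign F_a(z)=\sign\bigl(2-F_a(az)\bigr)$. This identity is the engine of both directions.

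\emph{Sufficiency.} Assume $F_a(x_0)\le 0$ with $x_0\in(-(a^2+1),-(a+1))$, so $x_0<-1$. A short induction on the functional equation gives $F_a(a^{2j+1}x_0)\ge 2$ for all $j\ge 0$ and $F_a(a^{2j}x_0)<0$ for all $j\ge 1$; hence $F_a$ changes sign on each of the intervals $(a^{k+1}x_0,a^kx_0)$, $k\ge 0$, and on $(x_0,0)$, so it has infinitely many negative zeros, at least one per ``scale'' $[a^k|x_0|,a^{k+1}|x_0|]$. To upgrade this to ``$F_a$ has \emph{only} real zeros'' — which, $F_a$ being of order $0<2$, is the same as $F_a\in\mathcal{L-P}$ — I would pass to the Taylor sections $S_n(z,F_a)$: they share the first $n+1$ coefficients of $F_a$, so for $n$ large $S_n(x_0)<0$ (the boundary case $F_a(x_0)=0$ needing a separate limiting argument) while $q_2(S_n)<q_3(S_n)<\cdots<q_n(S_n)$. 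The crux is then a lemma about \emph{polynomials}: a polynomial with positive coefficients, strictly increasing second quotients, and a non-positive value at some point of $(-p_2,-p_1)$ belongs to $\mathcal{L-P}$. I would prove this by induction on the degree, tracking the location of the zeros of $S_n$ relative to $-p_1,-p_2,\dots$ and propagating the sign change downward via the section form of the functional equation ($S_n(az)=2+(z-1)S_{n-1}(z)-a_nz^n$); once $S_n\in\mathcal{L-P}$ for all large $n$, Theorem A (i) yields $F_a\in\mathcal{L-P}$. (An alternative to the polynomial lemma is to complement the zero-localization above with a zero-counting argument for the order-zero function $F_a$, using $a_k\asymp a^{-k(k+1)/2}$.)

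\emph{Necessity.} Assume $F_a\in\mathcal{L-P}$. Positivity of the coefficients forces all zeros negative, $F_a(z)=\prod_{k\ge 1}(1+z/x_k)$ with $0<x_1\le x_2\le\cdots$, and the elementary estimate $x_1^{-1}<\sum_k x_k^{-1}=a_1/a_0=(a+1)^{-1}$ gives $x_1>a+1$. Since $F_a(-x_1)=0$, it remains only to show $x_1<a^2+1$, for then $-x_1\in(-(a^2+1),-(a+1))$ is the required point. This is the main obstacle. By Theorem F, applied to $(q_n(F_a))$, which is increasing with limit $a$, we may assume $a\ge q_\infty$, leaving room for estimates. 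I would combine the functional equation at $z=-x_1$ — equivalently the product representation $F_a(w)-2=-(1-w/a)\prod_k\bigl(1+w/(ax_k)\bigr)$, valid because $F_a\in\mathcal{L-P}$ implies $(z-1)F_a(z)=F_a(az)-2\in\mathcal{L-P}$ — with the Newton-sum identities $\sum_k x_k^{-1}=(a+1)^{-1}$, $\sum_k x_k^{-2}=a_1^2-2a_2$, and so on, together with $x_k\ge x_1$, to bound $\prod_k\bigl(1-x_1/(ax_k)\bigr)$ from below and hence $x_1$ from above, and then verify that this bound is $<a^2+1$ throughout $a\ge q_\infty$.

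The genuinely hard points, in both directions, are the same in spirit: converting a single sign change in the critical interval $(-(a^2+1),-(a+1))$ into control of the entire zero set — the polynomial lemma on the sufficiency side, and the estimate $x_1<a^2+1$ on the necessity side. I expect the functional equation to supply the structure in each case, with the remaining effort going into the inductive bookkeeping (sufficiency) and into sharpening the Newton-sum/product estimates enough to cover all $a\ge q_\infty$ (necessity).
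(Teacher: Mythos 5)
Your functional equation $F_a(az)=2+(z-1)F_a(z)$ is correct (it follows from $a_k(a^k+1)=a_{k-1}$) and is a genuinely nice observation: it yields the sign alternation $F_a(a^{2j+1}x_0)\ge 2$, $F_a(a^{2j}x_0)<0$ in two lines, which is essentially what the paper's Lemma~\ref{th:lm6} obtains, at the radii $\rho_k=q_2\cdots q_k\sqrt{q_{k+1}}$, by laborious term-by-term comparison of the Taylor coefficients. However, the proposal as written has two genuine gaps, and they are exactly the two steps that carry all the weight of the paper's argument.

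For sufficiency, the sign changes give infinitely many negative real zeros, but nothing yet excludes \emph{additional nonreal} zeros. The ``polynomial lemma'' you invoke (positive coefficients, increasing $q_n$, a non-positive value in $(-p_2,-p_1)$ implies only real zeros) is precisely the hard content of the theorem and is asserted, not proved; the proposed induction via $S_n(az)=2+(z-1)S_{n-1}(z)-a_nz^n$ would have to control the perturbation $a_nz^n$ at points of modulus up to $p_1\cdots p_n$, where it is not small, and no such control is offered. What is needed is an exact zero count matching the count of sign changes: the paper's Lemma~\ref{th:lm3} proves, by a Rouch\'e argument on the circles $|z|=\rho_j$ comparing $f_a$ with a five-term middle block $\widetilde{g}_j$, that $f_a$ has exactly $j$ zeros in $|z|<\rho_j$; together with the $j-1$ real zeros produced by the sign alternation and the parity of nonreal zeros, this forces all zeros to be real. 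Your parenthetical ``alternative'' zero-counting remark points at the right idea but none of the estimates are carried out. For necessity, the key inequality $x_1<a^2+1$ is likewise only a plan (``bound $\prod_k\bigl(1-x_1/(ax_k)\bigr)$ from below \dots and then verify''); no inequality is actually derived, and it is not clear the Newton-sum route closes uniformly in $a$. The paper obtains this by first showing $q_2(F_a)\ge 3$ is necessary (Lemma~\ref{th:lm0}) and then running a second Rouch\'e argument on $|z|=a^2+1$ (Lemmas~\ref{th:lm1} and~\ref{th:lm2}), which shows $F_a$ has exactly two zeros in that disc, necessarily real, negative, and of modulus exceeding $a+1$, hence lying in $(-(a^2+1),-(a+1))$. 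In short: the functional equation is a real simplification of one lemma, but the two quantitative zero-counting estimates that make the theorem true are named rather than proved.
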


In order to sharpen this result, we will prove the following theorem.

\begin{Theorem}
\label{th:mthm2}

\begin{itemize}
\item[(i)]If $F_a(z) = \sum_{k=0}^\infty \frac{z^k}{(a^k+1)(a^{k-1}+1)\cdot \ldots \cdot (a+1)}, a>1,$ 
belongs to the Laguerre-P\'olya class, then $a \geq 3.90155;$\\
\item [(ii)]If $a \geq 3.91719 ,$ then $F_a(z) = 
\sum_{k=0}^\infty \frac{z^k}{(a^k+1)(a^{k-1}+1)\cdot \ldots \cdot (a+1)}, 
a>1,$ belongs to the Laguerre-P\'olya class.
\end{itemize}
\end{Theorem}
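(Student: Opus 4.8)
The plan is to prove both theorems together, since Theorem~\ref{th:mthm1} is the qualitative backbone and Theorem~\ref{th:mthm2} is its quantitative refinement. For Theorem~\ref{th:mthm1}, the ``only if'' direction should be the easy half: if $F_a \in \mathcal{L-P}$ then $F_a$ has only negative zeros (the coefficients are positive, so there are no nonnegative ones), and one must locate at least one such zero in the window $(-(a^2+1),-(a+1))$. The natural device is the behaviour of the ratio $F_a(z)/F_a(az)$ or, more simply, a comparison of consecutive partial sums: note that $p_n(F_a) = a^n+1$, so the ``expected'' location of the $n$-th zero in a Hutchinson-type argument is near $-p_n = -(a^n+1)$; for $n=1,2$ this gives the interval $(-(a^2+1),-(a+1))$. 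Since $F_a(0) = 1 > 0$, showing $F_a(z_0) \le 0$ somewhere in that interval is equivalent (together with the two-sided estimates on $F_a$ at the endpoints that force a sign change there rather than elsewhere) to $F_a$ actually having a zero, hence to membership in $\mathcal{L-P}$ via the ``if'' direction.

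The ``if'' direction is the heart of Theorem~\ref{th:mthm1} and is where I expect the main obstacle. The strategy is the one implicit in Theorem~C and Theorem~E: one wants to run a ``complex-zero-decreasing''/successive-approximation argument on the sections $S_n(z,F_a) = \sum_{k=0}^n a_k z^k$, showing that each section has only real negative zeros and that the zero sets interlace, so that the limit $F_a$ lies in $\mathcal{L-P}$ by Theorem~A(i). Because $q_n(F_a)$ is \emph{increasing} (as computed in the excerpt) toward the limit $a$, the naive monotone argument of Theorem~E does not directly apply, and the naive Hutchinson bound $q_n \ge 4$ fails for the first few $n$ when $a$ is only slightly below $4$. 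The fix is to exploit the hypothesis: the assumption $F_a(z_0)\le 0$ for some $z_0 \in (-(a^2+1),-(a+1))$ is used as the ``seed'' that makes the induction on sections go through. Concretely, I would prove by induction on $n$ a statement of the form: for each $n$, $S_n(z,F_a)$ has $n$ simple negative zeros $\xi_1^{(n)} < \cdots < \xi_n^{(n)}$, with the largest zero $\xi_n^{(n)}$ (closest to the origin) trapped in a controlled interval comparable to $(-(a+1), 0)$, the second-largest near $-(a^2+1)$, etc.; the sign condition at $z_0$ provides the base of this induction, and the increasing nature of $q_n$ combined with $q_n \to a$ keeps the later steps in line. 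An alternative, possibly cleaner route is to establish the identity/functional relation $F_a(z) = \frac{1}{a+1}\,(\text{shift of }F_a) \cdot(\dots)$ coming from $F_a(z)(1) = 1 + \frac{z}{a+1}F_a(z/a)$ --- wait, more precisely $a_k = a_{k-1}/(a^k+1)$ gives $(a^k+1)a_k = a_{k-1}$, which translates into a $q$-difference functional equation for $F_a$; chasing sign changes through that functional equation on the negative axis is the most promising concrete mechanism, and it is exactly the analogue of what was done for $g_a$ in \cite{klv}.

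For Theorem~\ref{th:mthm2}(i), the bound $a \ge 3.90155$ follows by contraposition from Theorem~\ref{th:mthm1}: if $a < 3.90155$, I would show that $F_a(z) > 0$ for all $z \in [-(a^2+1), -(a+1)]$, hence $F_a \notin \mathcal{L-P}$. The estimate $F_a(z) > 0$ on that interval is obtained by splitting the series at a suitable index $m = m(a)$, bounding the ``head'' $\sum_{k=0}^{m}$ below by an explicit polynomial computation at the two endpoints (and using convexity/monotonicity of $F_a$ between them, or just checking enough points), and bounding the ``tail'' $\sum_{k>m}$ in absolute value by a geometric-type series using $|z|^k a_k \le (a^2+1)^k / \prod_{j=1}^k (a^j+1)$, which decays superexponentially; the cutoff $3.90155$ is then whatever numerical threshold makes head $>$ tail rigorously. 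For part (ii), with $a \ge 3.91719$ I would verify the hypothesis of Theorem~\ref{th:mthm1} directly: exhibit an explicit $z_0 \in (-(a^2+1),-(a+1))$ --- the candidate $z_0 = -p_2(F_a) = -(a^2+1)$ itself, or a point like $z_0 = -a(a+1)$, is the natural guess --- and show $F_a(z_0) \le 0$ by the same head/tail splitting, now with the inequality going the other way, valid once $a$ is large enough. The main technical nuisance in both parts is making the tail bound and the finite head evaluation fully rigorous with the stated decimal constants; the gap between $3.90155$ and $3.91719$ is precisely the slack between the two crude estimates and is not claimed to be tight. I expect the genuinely hard step to remain the ``if'' direction of Theorem~\ref{th:mthm1} --- turning a single sign condition on $F_a$ into real-rootedness of all sections --- with everything in Theorem~\ref{th:mthm2} being careful but routine estimation on top of it.
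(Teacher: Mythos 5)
Your overall strategy for Theorem~\ref{th:mthm2} --- deduce (i) from the ``only if'' half of Theorem~\ref{th:mthm1} by showing $F_a$ stays positive on $[-(a^2+1),-(a+1)]$ when $a$ is too small, and deduce (ii) from the ``if'' half by exhibiting an explicit point where $F_a\le 0$ --- is exactly the paper's strategy. But the execution as proposed has concrete gaps. The device that makes everything a finite polynomial computation in the paper is the alternating-series sandwich: writing $f_a(z)=F_a(-z)$, for $z\in(a+1,a^2+1)$ the terms of $\sum_{k\ge1}(-1)^k z^k a_k$ decrease in modulus, so $S_{2n+1,a}(z)\le f_a(z)\le S_{2n,a}(z)$ for every $n$. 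Thus for (i) the condition $f_a(z_0)\le 0$ forces $S_{3,a}(z_0)\le 0$, and for (ii) it suffices to make $S_{6,a}(z_0)\le 0$. Your ``head/tail splitting with a geometric tail bound'' is a weaker, two-sided version of this and would cost extra slack in the constants; more importantly, for (i) your plan to certify $F_a>0$ on the \emph{whole} interval by evaluating at the endpoints plus ``convexity/monotonicity \dots or just checking enough points'' is not a proof: a sharp threshold like $3.90155$ cannot be extracted from finitely many sample points without an additional argument. The paper instead minimizes the cubic $S_{3,a}((a+1)y)=1-y+y^2/q_2-y^3/(q_2^2q_3)$ exactly over $y\in(1,q_2)$ by locating its critical point $y_1=\bigl(q_2q_3-q_2\sqrt{q_3(q_3-3)}\bigr)/3$, and the requirement $K(y_1)\le 0$ reduces to the explicit inequality $a^8-8a^7+15a^6+12a^5-21a^4-28a^3-43a^2-40a-16\ge 0$, whose root is the constant $3.90155$. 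Without that exact minimization you do not recover the stated bound.

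For (ii), both of your candidate test points fail to lie in the required open interval: $-(a^2+1)$ is an endpoint, and $-a(a+1)=-(a^2+a)$ satisfies $a^2+a>a^2+1$, so it is outside $(-(a^2+1),-(a+1))$ altogether. The paper takes $z_0=\tfrac{2}{3}(a+1)q_2=\tfrac23(a^2+1)$ (which does lie in $(a+1,a^2+1)$ once $2a^2-3a-1>0$) and verifies $S_{6,a}(z_0)\le 0$, which after clearing denominators is a single polynomial inequality in $a$ of degree $20$ holding for $a\ge 3.91719$; then $f_a(z_0)\le S_{6,a}(z_0)\le 0$ and Theorem~\ref{th:mthm1} applies. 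So the missing ingredients are: the one-sided section bounds $S_{2n+1,a}\le f_a\le S_{2n,a}$, the exact minimization of the cubic section over the interval for (i), and a correct, explicitly chosen interior test point together with the sixth section for (ii). (Your sketch of Theorem~\ref{th:mthm1} itself also diverges substantially from the paper's Rouch\'e-based proof, but since that theorem may be quoted as established, it does not by itself invalidate your plan for Theorem~\ref{th:mthm2}.)
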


The question, for which $a >1$ does the entire function $F_a(z) = \sum_{k=0}^\infty \frac{z^k}{(a^k+1)(a^{k-1}+1)\cdot \ldots \cdot (a+1)}$ belong to the Laguerre-P\'olya class, aroused our interest. The following statement has not been proved due to some technical reasons. For now, we would like to leave it for the reader as an open problem.
\begin{Conjecture}
Let $F_a(z) = \sum_{k=0}^\infty \frac{z^k}{(a^k+1)(a^{k-1}+1)\cdot \ldots \cdot (a+1)}, a>1,$ be an entire function. Suppose that for $a_1>1$, $F_{a_1}$ belongs to the 
Laguerre-P\'olya class if and only if there exists $z_1 \in (-(a_1^2+1), -(a_1+1))$ such that $F_{a_1}(z_1) \leq 0$.
Then, for any $a_2 > a_1$, there exists $z_2 \in (-(a_2^2+1), -(a_2+1))$ such that $F_{a_2}(z_2) \leq 0.$
\end{Conjecture}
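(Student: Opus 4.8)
The plan is to deduce the Conjecture from Theorem~\ref{th:mthm1}. Pairing consecutive terms, for $t>0$ one has
\[
F_a(-t)=\sum_{m=0}^{\infty}\frac{t^{2m}}{(a+1)(a^2+1)\cdots(a^{2m}+1)}\left(1-\frac{t}{a^{2m+1}+1}\right);
\]
a direct computation shows $F_a(-(a+1))>0$ and that $F_a(-(a^2+1))$ equals $\frac{a(a-1)^2}{(a+1)(a^3+1)}$ plus a series of positive terms, hence is positive, for every $a>1$. Consequently, setting $\psi(a):=\min_{t\in[a+1,\,a^2+1]}F_a(-t)$, we see that whenever $\psi(a)\le 0$ the minimum is attained at an interior point, and Theorem~\ref{th:mthm1} yields $\{a>1:F_a\in\mathcal{L-P}\}=\{a>1:\psi(a)\le 0\}$. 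Since $\psi$ is continuous, with $\psi>0$ on $(1,3.90155)$ and $\psi\le 0$ on $[3.91719,\infty)$ by Theorem~\ref{th:mthm2}, it suffices to prove the single implication
\[
\psi(a_0)=0\ \Longrightarrow\ \psi'(a_0)<0 .
\]
Indeed, this forces $\psi$ to have exactly one zero $a^{*}$, with $\psi>0$ on $(1,a^{*})$ and $\psi\le 0$ on $[a^{*},\infty)$, so that $\{a>1:F_a\in\mathcal{L-P}\}=[a^{*},\infty)$ is a half-line and the Conjecture follows.

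Now fix $a_0$ with $\psi(a_0)=0$ and let $t^{*}$ be a minimiser. By the endpoint positivity above, $z^{*}:=-t^{*}\in\bigl(-(a_0^2+1),-(a_0+1)\bigr)$, and $F_{a_0}(z^{*})=0$, $\frac{d}{dt}F_{a_0}(-t)\big|_{t^{*}}=0$, so also $F_{a_0}'(z^{*})=0$: thus $z^{*}$ is a double zero of $F_{a_0}$, which lies in $\mathcal{L-P}$ by Theorem~\ref{th:mthm1}. Since $t^{*}$ is interior to $[a_0+1,a_0^2+1]$, the envelope theorem for minima (after checking the minimiser is unique, or else passing to one-sided derivatives and the full set of minimisers) gives $\psi'(a_0)=\frac{\partial}{\partial a}F_a(z^{*})\big|_{a=a_0}$, the term from the variation of $t^{*}$ vanishing because $F_{a_0}'(z^{*})=0$. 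Writing $A_k(a):=\prod_{j=1}^{k}(a^j+1)^{-1}$ and $S_m(z):=\sum_{k=0}^{m}A_k(a)z^k$, from $\partial_a A_k(a)=-A_k(a)\sum_{j=1}^{k}\frac{ja^{j-1}}{a^j+1}$ and Abel summation one obtains
\[
\frac{\partial}{\partial a}F_a(z)=-\sum_{i\ge 1}\frac{ia^{i-1}}{a^i+1}\bigl(F_a(z)-S_{i-1}(z)\bigr),
\]
so, using $F_{a_0}(z^{*})=0$,
\[
\psi'(a_0)=\sum_{i\ge 1}\delta_i\,S_{i-1}(z^{*}),\qquad \delta_i:=\frac{ia_0^{\,i-1}}{a_0^{\,i}+1}>0 .
\]

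It remains to show this sum is negative, and here a structural fact about the sections helps. Since $a_0+1<|z^{*}|<a_0^2+1$, the numbers $b_k:=A_k(a_0)(z^{*})^k$ satisfy $|b_0|<|b_1|$ and $|b_k|>|b_{k+1}|$ for all $k\ge 1$; hence the alternating series $\sum_k b_k=F_{a_0}(z^{*})=0$ has partial sums with $\sign S_m(z^{*})=(-1)^m$ and $|S_m(z^{*})|\le|b_{m+1}|$ for all $m\ge 0$, with $S_0(z^{*})=1$. Therefore
\[
\psi'(a_0)=\delta_1-\delta_2|S_1(z^{*})|+\delta_3|S_2(z^{*})|-\cdots ,
\]
and the required inequality amounts to showing that the first negative contribution, $\delta_2|S_1(z^{*})|=\delta_2\bigl(|z^{*}|/(a_0+1)-1\bigr)$, outweighs $\delta_1$ together with the rapidly decaying rest of the series.

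The main obstacle is exactly this quantitative step, and the need to make it uniform over the whole critical range of $a_0$. Heuristically it is clear: $|S_m(z^{*})|\le|b_{m+1}|$ decays super-exponentially in $m$ while the weights $\delta_i$ grow only linearly, so the series is governed by its first two terms and $\delta_2|S_1(z^{*})|\gg\delta_1$ unless $|z^{*}|$ lies very close to $a_0+1$. Ruling the latter out requires a lower bound for the position of the double zero $z^{*}$ inside $\bigl(-(a_0^2+1),-(a_0+1)\bigr)$, followed by sharp two-sided estimates for the sections $S_m(z^{*})$ — estimates of the same delicate, case-by-case kind as those in the proof of Theorem~\ref{th:mthm2}; securing them with a definite margin is where the argument currently gets stuck. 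The functional equation $F_a(az)=(z-1)F_a(z)+2$, which follows immediately from $(a^k+1)A_k(a)=A_{k-1}(a)$, does give some leverage — for instance it shows that $a_0 z^{*}$ is again a critical point of $F_{a_0}$, with $F_{a_0}(a_0 z^{*})=2$, while $F_{a_0}(a_0^{2}z^{*})=2a_0 z^{*}<0$, so $F_{a_0}$ is forced to oscillate — but it does not by itself close the estimate. An alternative route, proving the monotonicity first for the Taylor sections $S_n(z,F_a)$ and then passing to the limit, runs into the non-monotone behaviour of the section thresholds (compare parts (4) and (5) of Theorem~C) and does not seem to be any simpler.
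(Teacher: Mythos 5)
First, be aware that the paper does not prove this statement: it is explicitly left as an open problem (``The following statement has not been proved due to some technical reasons''), so there is no proof of record to compare yours against, and your attempt must be judged on its own merits.

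Your reduction is sensible and the parts that can be checked do check out. The endpoint values are indeed positive for every $a>1$ (the cancellation of the $k=1$ and $k=2$ terms at $z=-(a^2+1)$ gives exactly $1-\frac{(a^2+1)^2}{(a+1)(a^3+1)}=\frac{a(a-1)^2}{(a+1)(a^3+1)}$, and the remaining pairs are positive), so $F_a\in\mathcal{L-P}\Leftrightarrow\psi(a)\le 0$ with $\psi(a)=\min_{t\in[a+1,a^2+1]}F_a(-t)$, and reducing the Conjecture to the implication $\psi(a_0)=0\Rightarrow D^{+}\psi(a_0)<0$ is legitimate given Theorems \ref{th:mthm1} and \ref{th:mthm2}. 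The differentiation formula $\frac{\partial}{\partial a}F_a(z)=-\sum_{i\ge1}\frac{ia^{i-1}}{a^i+1}\bigl(F_a(z)-S_{i-1}(z)\bigr)$, the sign pattern $\sign S_m(z^*)=(-1)^m$ together with $|S_m(z^*)|\le|b_{m+1}|$ in your notation, and the functional equation $F_a(az)=(z-1)F_a(z)+2$ are all correct. However, there is a genuine gap, and it sits precisely at the step you yourself flag: the negativity of $\psi'(a_0)=\delta_1-\delta_2|S_1(z^*)|+\delta_3|S_2(z^*)|-\cdots$ is asserted only heuristically. Since $\delta_1/\delta_2=\frac{a_0^2+1}{2a_0(a_0+1)}\approx 0.42$ for $a_0$ near $3.9$, you need $|S_1(z^*)|=\frac{|z^*|}{a_0+1}-1$ to exceed roughly $0.45$ uniformly over the critical window $a_0\in[3.90155,\,3.91719]$ --- that is, a quantitative lower bound keeping the double zero $z^*$ away from the endpoint $-(a_0+1)$ --- together with an upper bound on $\sum_{m\ge2}\delta_{m+1}|S_m(z^*)|$. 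Neither estimate is supplied, and without them the sum could a priori be positive. What you have is a credible programme for attacking the conjecture, not a proof; closing it would require section estimates of the same explicit, case-by-case kind as in the proof of Theorem \ref{th:mthm2}.
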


\section{Proof of Theorem \ref{th:mthm1}}

The following lemma from \cite{ngthv2}  shows that 
for $q_2(F_a)  <3$ we have  $F_a \notin \mathcal{L-P}.$

\begin{Lemma} (\cite[Lemma 2.1]{ngthv2}).
\label{th:lm0}
Let $\varphi(z) = \sum_{k=0}^\infty (-1)^k a_k z^k$ be an entire function, 
$a_k > 0$ for all $k,$ $a_0 = a_1 = 1$,  and $q_n =q_n(\varphi)$ are increasing in $n$, i.e.  
$q_2 \leq q_3 \leq q_4 \leq \ldots . $  
If $\varphi \in \mathcal{L-P},$ then  $q_2(f) \geq 3$.
\end{Lemma}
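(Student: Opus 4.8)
The plan is to work with the standard necessary condition for membership in $\mathcal{L-P}$ applied to the second Taylor section, and then to propagate it using the monotonicity of the $q_n$. Recall that if $\varphi \in \mathcal{L-P}$ then, by Theorem A(ii), $\varphi$ is a locally uniform limit of polynomials with only real zeros; consequently every coefficient relation that holds for real-rooted polynomials and is preserved under such limits must hold for $\varphi$. In particular the classical Newton-type inequalities hold: for an entire function $\sum (-1)^k a_k z^k \in \mathcal{L-P}$ with $a_k>0$ one has $a_k^2 \ge \gamma\, a_{k-1} a_{k+1}$ for a suitable constant, and more to the point the Hutchinson/Laguerre-type two-term estimates are available. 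So the first step is to isolate exactly which inequality on $a_0,a_1,a_2,a_3$ (equivalently on $q_2$ and $q_3$) is forced by $\varphi \in \mathcal{L-P}$.

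First I would use the fact, well known in this circle of ideas (and implicit in Hutchinson's theorem), that membership in $\mathcal{L-P}$ is inherited by the function obtained from a ``tail shift'': if $\varphi(z) = \sum (-1)^k a_k z^k \in \mathcal{L-P}$, then so does any function of the form $\sum (-1)^k a_{k} a_{k+1}^{-1}\cdots$ — more precisely, one passes to the associated function with coefficients rescaled so that the new $q_2$ equals the old $q_2$, while using that $q_n$ increasing forces the relevant higher-order data to be dominated by the $q_2$ case. The cleanest route: consider the normalization $a_0=a_1=1$, so $a_2 = 1/q_2$ and $a_3 = 1/(q_2^2 q_3)$. Since $q_3 \ge q_2$, we get $a_3 \le 1/q_2^3$. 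Now I would compare $\varphi$ with the extremal function whose quotients are all equal to $q_2$, namely the partial-theta-type function $g$ with $q_n(g) \equiv q_2$; by Theorem C (item 1), $g \in \mathcal{L-P}$ iff $q_2 \ge q_\infty$, but here we only want the weaker bound $q_2 \ge 3$, which is exactly the threshold $c_3$ for the third section in Theorem C (item 5). So the strategy is to reduce to the third section $S_3$.

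The key step, then, is: show that $\varphi \in \mathcal{L-P}$ forces $S_3(z,\varphi) = 1 - z + a_2 z^2 - a_3 z^3$ (after normalization) to have only real zeros, or at least forces the discriminant-type inequality that a real-rooted cubic with positive coefficients must satisfy. A real-rooted cubic $1 - z + a_2 z^2 - a_3 z^3$ with all positive coefficients has all roots real and positive; writing the roots as $r_1,r_2,r_3>0$ we have $\sum r_i = 1/a_3$ wait — better to use $\sum 1/r_i$, $\sum 1/(r_ir_j)$ directly: the reversed polynomial $z^3 - z^2 + a_2 z - a_3$ has roots $1/r_i$, and Newton's inequality at the middle coefficient gives $(\text{e}_1)^2 \ge 3 \text{e}_2$ is false in general — instead the correct Newton inequality is $p_1^2 \ge p_0 p_2$ for the normalized symmetric means, which for the cubic $z^3 - e_1 z^2 + e_2 z - e_3$ with $e_1 = 1, e_2 = a_2, e_3 = a_3$ reads $(e_1/3)^2 \ge (e_2/3)$, i.e. $1/9 \ge a_2/3$, i.e. $a_2 \le 1/3$, i.e. $q_2 \ge 3$. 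That is exactly the claim. So the proof reduces to two things: (a) $\varphi \in \mathcal{L-P}$ implies $S_3(z,\varphi)$ real-rooted, and (b) the Newton inequality for the real-rooted cubic.

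I expect the main obstacle to be step (a): that a Taylor section of an $\mathcal{L-P}$ function need not itself be real-rooted in general, so one cannot invoke it naively. The way around this is to use the hypothesis that $q_n$ is \emph{increasing}. With increasing $q_n$, the function $\varphi$ is, coefficientwise, ``more log-concave in its tail'' than its head; one shows that if $S_3$ failed the Newton inequality ($q_2 < 3$), one could build a sequence of real-rooted polynomials converging to $\varphi$ whose third sections violate the same inequality in the limit — contradicting the continuity of the resultant/discriminant and the fact that the limiting real-rooted-polynomial sections must satisfy Newton. Concretely, I would take the approximating polynomials $P_n$ from Theorem A(ii), apply the complex-zero-decreasing sequence associated with the multiplier sequence $\{1/(q_2^{k(k-1)/2}\cdots)\}$ to truncate them, and track what increasing $q_n$ does to the sign of $P_n$ at a suitable negative point in $(-(q_2+1), -1)$ or so, in the spirit of Theorem C(3); the sign condition there, combined with $q_2 < 3$, yields the contradiction. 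This is the technical heart and is presumably where \cite{ngthv2} does the real work; I would cite that computation rather than reproduce it.
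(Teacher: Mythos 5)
Your reduction has a genuine gap at exactly the step you flag as ``the technical heart.'' The conclusion you want to extract --- that $\varphi\in\mathcal{L-P}$ forces the third section $S_3(z,\varphi)=1-z+a_2z^2-a_3z^3$ to be real-rooted, or at least forces the Newton inequality $1\ge 3a_2$ --- is not a consequence of membership in $\mathcal{L-P}$ alone: sections of $\mathcal{L-P}$ functions need not be real-rooted (this failure is the whole subject of Theorems C and D), and the inequality $a_1^2\ge 3a_0a_2$, i.e.\ $q_2\ge 3$, is simply false for general $\mathcal{L-P}$ functions (for $e^{-z}$ one has $q_2=2$). So any correct argument must use the monotonicity $q_3\ge q_2$ in an essential, quantitative way, and your sketch never shows where it enters: the proposed mechanism (approximating real-rooted polynomials, multiplier sequences, continuity of the discriminant) is not carried out, and you end by deferring the computation to the cited reference, which is the one thing a proof of this lemma cannot do.

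For comparison, the paper's proof is short and elementary and bypasses sections entirely. Writing $0<z_1\le z_2\le\cdots$ for the (necessarily positive) zeros of $\varphi$, it first notes $\sum z_k^{-2}=\sigma_1^2-2\sigma_2\ge 0$, giving $q_2\ge 2$; then it applies Cauchy--Schwarz in the form
\[
\Bigl(\sum_k z_k^{-1}\Bigr)\Bigl(\sum_k z_k^{-3}\Bigr)\ \ge\ \Bigl(\sum_k z_k^{-2}\Bigr)^{2},
\]
expresses the power sums through $\sigma_1,\sigma_2,\sigma_3$ (hence through $a_0,\dots,a_3$ by Vieta), and obtains $\frac{a_1^2a_2}{a_0^3}+3\frac{a_1a_3}{a_0^2}-4\frac{a_2^2}{a_0^2}\ge 0$, which in the normalization $a_0=a_1=1$, $a_2=1/q_2$, $a_3=1/(q_2^2q_3)$ reads $q_3(q_2-4)+3\ge 0$. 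Only now does monotonicity enter: if $q_2<4$ then replacing $q_3$ by the smaller $q_2$ preserves the inequality, giving $(q_2-1)(q_2-3)\ge 0$ and hence $q_2\ge 3$ (the root $q_2\le 1$ being excluded by $q_2\ge 2$). If you want to salvage your approach, you would need to prove an analogue of this four-coefficient inequality; the Newton inequality for a hypothetical real-rooted $S_3$ is not available as a starting point.
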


For the reader's convenience we give the proof.

\begin{proof}
Suppose that  $ \varphi \in \mathcal{L-P},$ and denote by  $0 < z_1 \leq z_2 \leq z_3 \leq  \ldots $  
the real roots of $\varphi $. 
We observe that
$$0 \leq \sum_{k=1}^\infty \frac{1}{z_k^2} = \left( \sum_{k=1}^\infty \frac{1}{z_k}  \right)^2 - 
2 \sum_{1\leq i< j < \infty} \frac{1}{z_i z_j} = \left(\frac{a_1}{a_0}\right)^2 - 2\frac{a_2}{a_0},$$
whence  $q_2 \geq 2.  $

According to the Cauchy-Bunyakovsky-Schwarz inequality, we obtain
$$(\frac{1}{z_1} + \frac{1}{z_2} + ...)(\frac{1}{z_1^3} + \frac{1}{z_2^3} + 
\ldots) \geq (\frac{1}{z_1^2 }+ \frac{1}{z_2^2} + \ldots)^2.$$

By Vieta's formulas, we have $ \sigma_1:= \sum_{k=1}^\infty \frac{1}{z_k} =  \frac{a_1}{a_0},$ 
$\sigma_2 = \sum_{1<i<j<\infty} \frac{1}{z_iz_j}$ $= \frac{a_2}{a_0},$ and
$\sigma_3 = \sum_{1<i<j<k<\infty} \frac{1}{z_iz_jz_k}
=  \frac{a_3}{a_0}.$ 
Further, we need the following identities:
$\sum_{k=1}^\infty \frac{1}{z_1^2} = \sigma_1^2 - 2\sigma_2,$ and 
$\sum_{k=1}^\infty \frac{1}{z_1^3} = \sigma_1^3 - 3\sigma_1\sigma_2 + 3\sigma_3.$ 
Consequently, we have 
$$\sigma_1 (\sigma_1^3 - 3\sigma_1\sigma_2 + 3\sigma_3) \geq (\sigma_1^2 - 2\sigma_2)^2,$$
or
$$\frac{a_1^2a_2}{a_0^3} + 3\frac{a_1a_3}{a_0^2} - 4\frac{a_2^2}{a_0^2} \geq 0.  $$

Since $a_0 = a_1 = 1$ and $a_2 = \frac{1}{q_2}, a_3 = \frac{1}{q_2^2q_3}$, we get: 
$$q_3(q_2 - 4) + 3 \geq 0.$$
Since we have the condition  $q_2 \leq q_3,$ supposing $q_2 < 4,$  we conclude that
$$q_2(q_2 - 4) + 3 \geq 0.$$
Therefore, we get that $q_2 \geq 3.$
\end{proof} 

So, if $F_a \in \mathcal{L-P},$ then  $q_2(F_a) \geq 3$.  If $q_2(F_a) \geq 4$, then for 
any $j$  $q_j(F_a) \geq 4$, so, according to the Hutchinson's theorem B, $F_a \in \mathcal{L-P}.$ 
It remains to consider the case $q_2(F_a) \in [3, 4).$

Thus, if $F_a(z) \in \mathcal{L-P}$, $q_2(F_a) = \frac{a^2+1}{a+1} \geq 3.$ It follows that 
$a \geq \frac{3+\sqrt{17}}{2} \approx 3.56155281 \ldots \geq q_\infty.$
Note that $\lim\limits_{n\to \infty} q_k(F_a) = a > q_\infty.$
If $q_2(F_a) < 4$, then $a < 2+ \sqrt{7} \approx 4.64575131\ldots.$ 
Accordingly, we look for $a \in (\frac{3+\sqrt{17}}{2}, 2+ \sqrt{7}).$

Further, we consider the entire function $f_a(z)= F_a (-z) =$ \\ $ \sum_{k=0}^\infty 
\frac{(-1)^kz^k}{(a^k +1)(a^{k-1}+1) \ldots (a+1)}$, and $q_k(f_a)$ are increasing in $k$, 
$\lim\limits_{n\to \infty} q_k(f_a) = a > q_\infty.$ Let $S_{n, a}(z) :=\sum_{k=0}^n 
\frac{(-1)^kz^k}{(a^k +1)(a^{k-1}+1) \ldots (a+1)}$ be the sections of $f_a$. 

We also need the Lemma below.

\begin{Lemma}
\label{th:lm1}
$\min_{|z| = a^2 +1}|S_{2, a}(z)| = 1.$ 
\end{Lemma}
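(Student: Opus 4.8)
The plan is to parametrize the circle $|z| = a^2+1$ and reduce $|S_{2,a}(z)|^2$ to a single real quadratic. Recall $S_{2,a}(z) = 1 - \frac{z}{a+1} + \frac{z^2}{(a^2+1)(a+1)}$. Writing $z = (a^2+1)e^{i\theta}$, the radius $a^2+1$ is chosen precisely so that both non-constant coefficients collapse to the same number:
\[
S_{2,a}\bigl((a^2+1)e^{i\theta}\bigr) = 1 - t e^{i\theta} + t e^{2i\theta}, \qquad t := \frac{a^2+1}{a+1} = q_2(F_a).
\]
This is the key simplification, and everything else is bookkeeping around it.

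Next I would expand the squared modulus. Separating real and imaginary parts, using $\cos 2\theta = 2\cos^2\theta - 1$ and $\sin 2\theta\,\sin\theta = 2(1-\cos^2\theta)\cos\theta$, one obtains after routine algebra
\[
\bigl|S_{2,a}\bigl((a^2+1)e^{i\theta}\bigr)\bigr|^2 = \phi(\cos\theta), \qquad \phi(c) := 4t c^2 - 2t(1+t)c + (2t^2 - 2t + 1).
\]
Since $t > 0$, $\phi$ is a convex parabola in $c$ with vertex at $c^* = \frac{1+t}{4}$.

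Now I would invoke the reduction made just before the lemma: it suffices to treat $a$ in the range $\bigl(\tfrac{3+\sqrt{17}}{2},\, 2+\sqrt 7\bigr)$, i.e. $t = q_2(F_a) \in [3,4)$; in particular $c^* = \frac{1+t}{4} \geq 1$. Hence $\phi$ is non-increasing on $[-1,1]$, so its minimum over the admissible range $c = \cos\theta \in [-1,1]$ is attained at $c = 1$, that is at $\theta = 0$, i.e. at the single point $z = a^2+1$. A direct substitution gives $\phi(1) = 4t - 2t(1+t) + 2t^2 - 2t + 1 = 1$, and therefore $\min_{|z| = a^2+1}|S_{2,a}(z)| = \sqrt{\phi(1)} = 1$.

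The computation is entirely elementary, so there is no genuine obstacle, only care in two places. First, the appeal to $t\geq 3$ is essential: for $1 < t < 3$ the vertex $c^*$ lies inside $(-1,1)$ and $\min_{[-1,1]}\phi = \phi(c^*) = -\tfrac14(t-1)^2(t-4) \neq 1$ in general, so the lemma really does live in the regime $q_2(F_a)\ge 3$ already isolated. Second, it is worth recording for later use that $\phi(-1) = (2t+1)^2$, i.e. $|S_{2,a}(-(a^2+1))| = 2q_2(F_a)+1$.
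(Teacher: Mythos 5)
Your proof is correct and takes essentially the same route as the paper's: parametrize the circle by $z=(a^2+1)e^{i\theta}$, reduce $|S_{2,a}|^2$ to the quadratic $4q_2t^2-2q_2(1+q_2)t+2q_2^2-2q_2+1$ in $t=\cos\theta$, and use $q_2\ge 3$ to place the vertex at or beyond $t=1$ so the minimum is attained at $t=1$ with value $1$. The paper additionally checks that the discriminant is negative for $q_2<4$, which is not needed to locate the minimum; your observation that the statement fails for $1<q_2<3$ correctly identifies why the standing restriction $q_2\in[3,4)$ matters.
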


\begin{proof}
In the proof we use the denotation $q_2$ instead of $q_2(f_a)$  ($q_2 = \frac{a^2+1}{a+1}$). 
By straightforward calculation, we have

$$|S_{2, a}((a^2 +1)e^{i\theta})|^2 = |1 - \frac{a^2+1}{a+1}e^{i\theta} + 
\frac{a^2+1}{a+1}e^{2i\theta}|^2=$$
$$(1 - q_2 \cos(\theta) +q_2 \cos(2\theta))^2   + (-q_2 \sin(\theta) + q_2 \sin(2\theta))^2 = $$
$$1+ 2q_2^2 - 2q_2(1+q_2)\cos(\theta) + 2q_2\cos(2\theta). $$

Set $t: = \cos(\theta), t \in [-1,1]$. It follows that $\cos(2\theta) = 2t^2-1$, and we get

$|S_ {2,a}((a^2 +1)e^{i\theta})|^2 = 4q_2t^2 - 2q_2(1+q_2)t + 1 - 2q_2 + 2q_2^2 =:\xi(t).$

As a result, we have obtained a quadratic expression, and we consider its discriminant. 
$D_\xi/4 = q_2^4 - 6q_2^3 + 9q_2^2 - 4q_2 = q_2(q_2 -1)^2(q_2-4).$ Under 
our assumptions, $q_2 < 4 $, so we can see that the discriminant is negative. 
Therefore, $\xi(t)$ has no zeros and $\xi(t)>0.$

The vertex point of the parabola is $t_v = \frac{1+q_2}{4} \geq 1$,  since 
$q_2 \geq 3$. So, $\min_{t\in[-1,1]}\xi(t) = \xi(1) = 1.$ Thus, 
$\min_{|z|=a^2+1}|S_{2, a}(z)|=1.$
\end{proof}

Now we want to get the estimation of the modulus of $R_{3, a} (z) :=  \sum_{k=3}^\infty 
\frac{(-1)^kz^k}{(a^k +1)(a^{k-1}+1) \ldots (a+1)}$ from above.

\begin{Lemma}
\label{th:lm2} 
$\max_{|z| = a^2 +1}|R_{3, a}(z)| \leq \frac{(a^2+1)^2}{(a+1)(a^3+1)} \cdot 
\frac{a^4+1}{a^4 - a^2}.$
\end{Lemma}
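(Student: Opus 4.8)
The plan is to bound $|R_{3,a}(z)|$ on the circle $|z| = a^2+1$ by the triangle inequality and then dominate the resulting positive series by a geometric one. Write $a_k = \frac{1}{(a^k+1)(a^{k-1}+1)\cdots(a+1)}$ for the (positive) Taylor coefficients of $F_a$ (equivalently, $|a_k|$ for the alternating series $f_a$). Since $|z| = a^2+1$,
\[
\max_{|z|=a^2+1}|R_{3,a}(z)| \leq \sum_{k=3}^\infty a_k (a^2+1)^k .
\]
First I would record the leading term: for $k=3$,
\[
a_3 (a^2+1)^3 = \frac{(a^2+1)^3}{(a^3+1)(a^2+1)(a+1)} = \frac{(a^2+1)^2}{(a+1)(a^3+1)},
\]
which is exactly the first factor appearing in the claimed bound. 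So it suffices to show that the full sum is at most this leading term times $\frac{a^4+1}{a^4-a^2}$.

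Next I would examine the ratio of consecutive terms of the series: for $k \geq 3$,
\[
\frac{a_{k+1}(a^2+1)^{k+1}}{a_k (a^2+1)^k} = \frac{a^2+1}{a^{k+1}+1}.
\]
Since $a>1$, the denominator $a^{k+1}+1$ is strictly increasing in $k$, so these ratios are strictly decreasing in $k$; in particular each of them is at most the first one, $\frac{a^2+1}{a^4+1}$, which lies in $(0,1)$ because $a^4+1 > a^2+1$ for $a>1$. Telescoping the ratios then gives, for every $k \geq 3$,
\[
a_k(a^2+1)^k = a_3(a^2+1)^3 \prod_{j=3}^{k-1}\frac{a^2+1}{a^{j+1}+1} \leq a_3(a^2+1)^3 \left(\frac{a^2+1}{a^4+1}\right)^{k-3}.
\]

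Finally I would sum this geometric majorant:
\[
\sum_{k=3}^\infty a_k(a^2+1)^k \leq a_3(a^2+1)^3 \sum_{j=0}^\infty \left(\frac{a^2+1}{a^4+1}\right)^{j} = a_3(a^2+1)^3 \cdot \frac{a^4+1}{a^4-a^2},
\]
where $a^4 - a^2 > 0$ again because $a>1$, and the geometric series converges because its ratio is $<1$. Substituting the closed form of $a_3(a^2+1)^3$ computed above yields precisely $\frac{(a^2+1)^2}{(a+1)(a^3+1)} \cdot \frac{a^4+1}{a^4-a^2}$, which is the assertion of Lemma \ref{th:lm2}. I do not expect any genuine obstacle: the argument is deliberately crude (the tail is dominated by its first term), and the only facts needed — that the term ratios are decreasing and that $\frac{a^2+1}{a^4+1}\in(0,1)$ — are immediate from $a>1$.
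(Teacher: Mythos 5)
Your proof is correct and follows essentially the same route as the paper: apply the triangle inequality on $|z|=a^2+1$, identify the $k=3$ term as $\frac{(a^2+1)^2}{(a+1)(a^3+1)}$, and dominate the tail by a geometric series with ratio $\frac{a^2+1}{a^4+1}$ (the paper factors out the leading term and compares term by term, which is the same telescoping-ratio argument you wrote out). No gap.
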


\begin{proof} We have

$$\max_{|z| = a^2 +1}|R_{3, a}(z)| \leq  \sum_{k=3}^\infty 
\left| \frac{(a^2+1)^k}{(a+1)(a^2+1) \ldots (a^k+1)} \right| = \frac{(a^2+1)^2}{(a+1)(a^3+1)} $$
$$ + \frac{(a^2+1)^3}{(a+1)(a^3+1)(a^4+1)}
+ \frac{(a^2+1)^5}{(a+1)(a^3+1)(a^4+1)(a^5+1)} + \ldots =$$
$$\frac{(a^2+1)^2}{(a+1)(a^3+1)} \left( 1 + \frac{a^2+1}{a^4+1} + 
\frac{(a^2+1)^2}{(a^4+1)(a^5+1)} + \ldots \right) $$
$$\leq \frac{(a^2+1)^2}{(a+1)(a^3+1)} \cdot \frac{1}{1 - \frac{a^2+1}{a^4+1}}= 
\frac{(a^2+1)^2}{(a+1)(a^3+1)} \cdot \frac{a^4+1}{a^4 - a^2}.$$
\end{proof}

Hence,  using the Lemmas above, we want to get the following: 
$$\max_{|z| = a^2 +1}|R_{3, a}(z)| \leq \frac{(a^2+1)^2}{(a+1)(a^3+1)} 
\cdot \frac{a^4+1}{a^4 - a^2} < \min_{|z| = a^2+1}|S_{2,a}(z)| = 1.$$
The inequality $$\frac{(a^2+1)^2}{(a+1)(a^3+1)} 
\cdot \frac{a^4+1}{a^4 - a^2} < 1$$ is equivalent to 
$$ a^7 - 3a^6 - a^4 - a^3 - 3a^2 - 1 > 0.$$  The latter is valid for $a \geq 3.16258 \ldots,$ 
so, under our assumptions that  $a \geq \frac{3+\sqrt{17}}{2} \approx 3.56155281 \ldots$, the inequality is fulfilled. Consequently, according to Rouch\'e's theorem, the functions $f_a$ and $S_{n, a}, n \geq 3,$
have the same number of zeros (counting multiplicities) inside the circle $\{ z: |z| < a^2 +1\} $  
as the polynomial $S_{2, a}.$

The discriminant of the polynomial $S_{2, a}$ under our assumption is negative:
$D = (a^2+1)^2 - 4(a+1)(a^2+1) = (a^2+1)(a^2 - 4a - 3) < 0.$
Thus, $S_{2,a}(z)$ has 2 complex conjugate zeros, and their modulus are equal to $\sqrt{(a+1)(a^2+1)} 
< a^2 +1$. Therefore, the polynomial $S_{2, a}$ has exactly two  zeros inside the circle $\{ z: |z| < a^2 +1\} $ 
for such values of $a$ that $q_2(f_a) \in [3, 4)$.

We have proved that the functions $f_a(z)$ and $S_{n, a}(z), n \geq 2,$
have exactly two zeros (counting multiplicities) inside the circle $\{ z: |z| < a^2 +1\} $ 
for such values of $a$ that $q_2(f_a) \in [3, 4)$. Thus, if $f_a \in \mathcal{L-P}$ (or 
$S_{n, a} \in \mathcal{L-P}$ for $n\geq 2$),   then 
these two zeros are real, and there  exists $z_0 \in (a+1, a^2+1)$ such that $f_a(z_0) \leq 0$.

Since $f_a(-x)$ and $ S_{n,a}(-x)$ have positive Taylor coefficients, the functions $f_a$ 
and $ S_{n,a}$ do not have zeros on $[- (a^2+1),0].$ 
For $x \in [0,a+1]$ we have 
$$ 1 \geq \frac{x}{a+1}  > \frac{x^2}{(a+1)(a^2+1)} >  \frac{x^3}{(a+1)(a^2+1)(a^3+1)}$$
$$  > \frac{x^4}{(a+1)(a^2+1)(a^3+1)(a^4+1)} > \cdots  , $$
whence 
\begin{equation}
\label{z1.1}
f_a(x) >0 \quad \mbox{for all}\quad x\in [0,a+1],
\end{equation}
and
\begin{equation}
\label{z1.2}
S_{n, a}(x) >0 \quad \mbox{for all}\quad x\in [0,a+1],\   n\geq 2.
\end{equation}

We have proved that  if $f_a \in \mathcal{L-P}$ (or 
$S_{n, a} \in \mathcal{L-P}$ for $n\geq 2$),   then 
there  exists $z_0 \in (a+1, a^2+1)$ such that $f_a(z_0) \leq 0$
(there  exists $z_n \in (a+1, a^2+1)$ such that $S_{n, a}(z_n) \leq 0$).

It remains to prove the inverse statement. To do that we need the following Lemma.

\begin{Lemma}
\label{th:lm3}
Denote by $\rho_j(f_a) := q_2(f_a)q_3(f_a) \ldots q_j(f_a) \sqrt{q_{j+1}(f_a)},$ 
$j\in\mathbb{N}.$ Then for every $j$ being large enough the function $f_a(z)$ has 
exactly $j$ zeros (counting multiplicities) in the circle $\{z:|z|<p_j(f_a)\}.$

\end{Lemma}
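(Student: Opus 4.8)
The plan is to use a Rouch\'e-type comparison between $f_a$ and a truncated product that approximates it near the circle $|z| = p_j(f_a)$. Recall that $p_j(f_a) = a_{j-1}/a_j = a^j + 1$ and that, by \eqref{defq}, we can write $a_n = a_1 q_2^{-(n-1)} q_3^{-(n-2)} \cdots q_n^{-1} (a_1/a_0)^{n-1}$, so the coefficients of $f_a$ are governed by the increasing sequence $q_k(f_a) = (a^k+1)/(a^{k-1}+1) \to a$. The quantity $\rho_j(f_a) = q_2 q_3 \cdots q_j \sqrt{q_{j+1}}$ is designed so that it lies strictly between $p_j(f_a)$ and $p_{j+1}(f_a)$ for large $j$: indeed $p_{j+1}/p_j = q_{j+1} > 1$, and $\rho_j / p_j$ and $p_{j+1}/\rho_j$ both tend to $\sqrt{a} > 1$. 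The circle of radius $p_j(f_a)$ is thus comfortably inside the annulus where one expects exactly the $j$-th coefficient to dominate.

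First I would split $f_a(z) = \sum_{k \le j} a_k z^k + \sum_{k > j} a_k z^k =: P_j(z) + R_{j+1}(z)$ and, on $|z| = p_j(f_a)$, compare $f_a$ with the single dominant term $a_j z^j$ (or better, with $P_j$). The key estimates are: (a) a lower bound for $|P_j(z)|$ on $|z| = p_j$, showing that the term $a_j z^j$ outweighs the sum of the lower-order terms $\sum_{k<j} a_k z^k$; and (b) an upper bound for the tail $|R_{j+1}(z)| = |\sum_{k>j} a_k z^k|$ on $|z| = p_j$. For (b), since $a_k (p_j)^k / (a_j (p_j)^j) = \prod_{i=j+1}^{k} (p_j / p_i)$ and $p_j/p_i = \prod_{\ell=j+1}^{i} q_\ell^{-1} \le q_{j+1}^{-(i-j)} \to a^{-(i-j)}$, the tail is bounded by a geometric series with ratio $\to 1/a < 1$, so $|R_{j+1}(z)| \le C a_j (p_j)^j \cdot \frac{1}{a-1}(1+o(1))$ — but more precisely the ratio of the first tail term to $a_j (p_j)^j$ is $p_j/p_{j+1} = q_{j+1}^{-1} \to 1/a$, which is small enough that the whole tail is a small fraction of $a_j(p_j)^j$. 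For (a), the lower-order terms satisfy $a_k (p_j)^k / (a_j (p_j)^j) = \prod_{i=k+1}^{j} (p_i/p_j) $ — wait, this needs care: $a_k(p_j)^k/(a_j(p_j)^j) = a_k/(a_j p_j^{j-k}) = (p_{k+1}p_{k+2}\cdots p_j)/p_j^{j-k} = \prod_{i=k+1}^{j}(p_i/p_j) \le 1$ since $p_i \le p_j$ for $i \le j$, and these ratios decay geometrically downward from $i=j$, so $\sum_{k<j} |a_k|(p_j)^k$ is again a controlled fraction of $a_j (p_j)^j$. Combining, on $|z| = p_j$ one gets $|a_j z^j| > |\sum_{k \ne j} a_k z^k|$ for $j$ large, and so by Rouch\'e's theorem $f_a$ has exactly $j$ zeros in $\{|z| < p_j(f_a)\}$, matching the polynomial $a_j z^j$.

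The role of $\rho_j$: strictly speaking, the clean domination $|a_j z^j| > |\text{rest}|$ holds most transparently on the circle $|z| = \rho_j$ rather than exactly $|z| = p_j$, because $\rho_j$ is the geometric-mean-type radius that balances the growth from both sides — there the ratios $p_j/\rho_j \to 1/\sqrt a$ and $\rho_j/p_{j+1} \to 1/\sqrt a$ make both the head sum and the tail sum geometrically small with the same ratio $1/\sqrt a < 1$. So the cleanest route is: prove by Rouch\'e on $|z| = \rho_j(f_a)$ that $f_a$ has exactly $j$ zeros in $\{|z| < \rho_j\}$; then observe that $f_a$ has no zeros in the closed annulus $p_j(f_a) \le |z| \le \rho_j(f_a)$ for large $j$ (again a one-term-dominates argument on this whole annulus, since $a_j|z|^j$ beats everything there), which forces $f_a$ to have exactly $j$ zeros already inside $\{|z| < p_j(f_a)\}$.

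The main obstacle I anticipate is making the two geometric-series bounds fully quantitative and uniform in a way that genuinely kicks in for ``$j$ large enough'' — one must track the $o(1)$ corrections coming from $q_k(f_a) = (a^k+1)/(a^{k-1}+1)$ converging to $a$ only at an exponential rate, and confirm that the ratio of the first neglected term to the dominant term $a_j z^j$ is bounded away from $1$ by a margin that survives summation of the full series. Concretely, the tail ratio is $|z|/p_{j+1} = \rho_j/p_{j+1} \to 1/\sqrt a$ and the head ratio is $p_j/|z| = p_j/\rho_j \to 1/\sqrt a$, so each of $\sum_{k>j}$ and $\sum_{k<j}$ contributes at most roughly $a_j|z|^j \cdot \frac{1/\sqrt a}{1 - 1/\sqrt a}$, and for $a > q_\infty > 3$ we have $1/\sqrt a < 1/\sqrt 3 < 0.578$, giving $\frac{1/\sqrt a}{1-1/\sqrt a} < 1.37$ — not automatically below $1/2$ each, so the argument must be done on $\rho_j$ with the sharper asymptotic constant, or one must absorb the polynomial $P_j$ as a whole rather than term by term. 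This bookkeeping is the crux; once it is in place, Rouch\'e and the annulus observation finish the proof.
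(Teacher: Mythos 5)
There is a genuine gap, and it sits exactly at the point you flag at the end but do not resolve. Your main line is a Rouch\'e comparison of $f_a$ against the single monomial $a_jz^j$ on $|z|=\rho_j$. This comparison provably fails for the values of $a$ that matter. In the normalization $a_0=a_1=1$ (so that $p_n=q_2\cdots q_n$ and $\rho_j=p_j\sqrt{q_{j+1}}$ is indeed the geometric mean of $p_j$ and $p_{j+1}$ --- note that with your unnormalized $p_j=a^j+1$ one actually has $\rho_j<p_j$, a scaling slip), the two neighboring terms alone satisfy
$$\frac{a_{j-1}\rho_j^{\,j-1}+a_{j+1}\rho_j^{\,j+1}}{a_j\rho_j^{\,j}}=\frac{p_j}{\rho_j}+\frac{\rho_j}{p_{j+1}}=\frac{2}{\sqrt{q_{j+1}}},$$
and at the point $z=-\rho_j$ all terms of $f_a(-z)$ are positive, so $|f_a(z)-(-1)^ja_jz^j|\geq \frac{2}{\sqrt{q_{j+1}}}\,|a_jz^j|>|a_jz^j|$ whenever $q_{j+1}<4$. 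Since $q_n(f_a)\uparrow a$ and the relevant range after Lemma \ref{th:lm0} and Hutchinson's theorem is $a\in\bigl(\frac{3+\sqrt{17}}{2},\,2+\sqrt7\bigr)$, with the true threshold near $3.9$, the inequality $|a_jz^j|>|f_a(z)-a_jz^j|$ is false on the circle for essentially the entire range of interest (and even for $a$ slightly above $4$ the $k=j\pm2$ terms push the sum past $1$). Your closing remark that one may need to ``absorb the polynomial $P_j$ as a whole rather than term by term'' is precisely the fix, but it is the crux of the lemma, not bookkeeping, and it is not carried out.

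The paper's proof does exactly this: it compares $f_a$ on $|z|=\rho_j$ with a five-term block $\widetilde g_j$ (degrees $j-2$ through $j+2$, with the top coefficient slightly adjusted), bounds $|\widetilde g_j(\rho_je^{i\theta})|$ from below \emph{without} the triangle inequality by reducing it to the quadratic $\widetilde\psi_j(t)=4t^2-2q_j\sqrt{q_{j+1}}\,t+(q_jq_{j+1}-2)$ in $t=\cos\theta$ and checking positivity on $[-1,1]$, and only then applies Rouch\'e against the head and tail sums (which, relative to the block, do decay fast enough). A second essential ingredient you omit entirely is the zero count for the comparison function: $\widetilde g_j(\rho_jw)$ is $w^{j-2}$ times the self-reciprocal polynomial $P_j(w)=1-q_j\sqrt{q_{j+1}}\,w+q_jq_{j+1}w^2-q_j\sqrt{q_{j+1}}\,w^3+w^4$, which has no zeros on $|w|=1$ and hence, by self-reciprocity, exactly two zeros in $|w|<1$, giving $j-2+2=j$ zeros for $\widetilde g_j$. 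Without this (or some substitute), even a successful Rouch\'e step would not tell you the zero count is $j$. Your final ``annulus'' step rests on the same single-term domination and fails for the same reason; in fact the radius $p_j$ in the statement is a misprint for $\rho_j$, which is how the lemma is invoked later in the paper, so that step is not needed at all.
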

\begin{proof}

We use the denotations of $p_n$ and $q_n$ instead of $p_n(f_a)$ and $q_n(f_a)$. 
Then the function obtains the following form
$$f_a(z) = \sum_{k=0}^\infty \frac{(-1)^kz^k}{q_2^{k-1}q_3^{k-2} \ldots q_k},$$ 
where $q_2 < q_3 < \ldots$, $\lim\limits_{n \to \infty} q_k = a \geq 3.90155\ldots.$

We observe that 
$$f_a (z) = \sum_{k=0}^{j-3} \frac{(-1)^kz^k}{q_2^{k-1}q_3^{k-2} \ldots q_k} + 
\sum_{k = j-2}^{j+2}\frac{(-1)^kz^k}{q_2^{k-1}q_3^{k-2} \ldots q_k} + $$ 
$$ \sum_{k = j+3}^\infty \frac{(-1)^kz^k}{q_2^{k-1}q_3^{k-2} \ldots q_k}=: 
\Sigma_1(z) + g_j(z) + \Sigma_2(z).$$

We have
$$g_j(z) = \left( \sum_{k = j - 2}^{j+1} \frac{(-1)^kz^k}{q_2^{k-1} q_3^{k-2} \ldots q_k} + 
\frac{(-1)^{j+2}z^{j+2}}{q_2^{j+1}q_3^j \ldots q_{j-2}^5q_{j-1}^4q_j^4q_{j+1}^2}\right) +$$
$$\frac{(-1)^{j+2}z^{j+2}}{q_2^{j-3}q_3^{j-4} \ldots q_{j-2}} \bigg( \frac{1}{q_2^4q_3^4 
\ldots q_{j-1}^4q_j^3q_{j+1}^2q_{j+2}} - \frac{1}{q_2^4q_3^4 \ldots q_{j-1}^4q_j^4q_{j+1}^2}\bigg)$$ 
$$ =: \widetilde{g}_j(z) + \xi_j(z).$$

Let  $\rho_j := q_2q_3 \ldots q_j \sqrt{q_{j+1}}$. It follows that $q_2q_3 \ldots q_j  < 
\rho_j < q_2q_3 \ldots q_jq_{j+1}$.
We get
$$(-1)^{j-2}g_j(\rho_je^{i\theta}) = e^{i(j-2)\theta}q_2q_3^2 \ldots q_{j-2}^{j-3}q_{j-1}^{j-2}q_j^{j-2}
q_{j+1}^{\frac{j-2}{2}} \cdot \bigg( 1 - e^{i\theta}q_j \sqrt{q_{j+1}} $$
$$ + e^{2i\theta}q_jq_{j+1} - e^{3i\theta}q_j\sqrt{q_{j+1}} + e^{4i\theta}q_jq_{j+2}^{-1}\bigg)$$
$$= e^{i(j-2)\theta}q_2q_3^2 \ldots q_{j-2}^{j-3}q_{j-1}^{j-2}q_j^{j-2}q_{j+1}^{\frac{j-2}{2}} 
\cdot \bigg( 1 - e^{i\theta}q_j \sqrt{q_{j+1}}$$
$$ + e^{2i\theta}q_jq_{j+1} - e^{3i\theta}q_j\sqrt{q_{j+1}} + e^{4i\theta}\bigg)$$ 
$$+ e^{i(j+2)\theta}q_2q_3^2 \ldots q_{j-2}^{j-3}q_{j-1}^{j-2}q_j^{j-2}q_{j+1}^{\frac{j-2}{2}} 
\bigg(q_jq_{j+2}^{-1} - 1\bigg) =: \widetilde{g}_j(\rho_je^{i\theta}) + \xi_j(\rho_je^{i\theta}).$$

We show that for every sufficiently large $j$ the following inequality holds:  
$$\min_{0\leq \theta\leq 2\pi}|\widetilde{g}_j(\rho_j e^{i\theta})| > \max_{0\leq \theta\leq 2\pi}|f_a
(\rho_j e^{i\theta}) -\widetilde{g}_j(\rho_j e^{i\theta})|, $$ 
so that the number of zeros of  $f_a$  in the circle $\{z : |z| < \rho_j  \}$  is equal to  
the number of zeros of  $\widetilde{g}_j$ in the same circle. We have
 
$$\widetilde{g}_j(\rho_je^{i\theta}) =e^{ij\theta}q_2q_3^2 \ldots q_{j-2}^{j-3}q_{j-1}^{j-2}
q_j^{j-2}q_{j+1}^{\frac{j-2}{2}} 
\cdot \bigg( e^{-2i\theta} - e^{-i\theta}q_j \sqrt{q_{j+1}} + q_jq_{j+1} -$$
$$ e^{i\theta}q_j \sqrt{q_{j+1}}  + e^{2i\theta}\bigg)= e^{ij\theta}q_2q_3^2 \ldots 
q_{j-2}^{j-3}q_{j-1}^{j-2}q_j^{j-2}q_{j+1}^{\frac{j-2}{2}} \cdot \bigg( 2\cos 2\theta - 
2\cos\theta q_j \sqrt{q_{j+1}} $$
$$+ q_jq_{j+1} \bigg) =: e^{ij\theta}q_2q_3^2 \ldots q_{j-2}^{j-3}
q_{j-1}^{j-2}q_j^{j-2}q_{j+1}^{\frac{j-2}{2}} 
\cdot  \psi_j(\theta).$$

We find $\min_{0 \leq \theta \leq 2\pi}  |\widetilde{g}_j(\rho_je^{i\theta})|.$
Set $t:= \cos \theta$, $t \in [-1, 1]$. Then $\cos 2\theta = 2t^2 - 1,$ 
and
$$\psi_j(\theta) =\widetilde{\psi}_j(t) := 4t^2 - 2q_j\sqrt{q_{j+1}}t + (q_jq_{j+1} - 2).$$

The vertex of the parabola is $t_j = q_j\sqrt{q_j+1}/4.$ Under our assumption, $t_j > 1.$
Hence, 
$$\min_{t \in [-1, 1]} \widetilde{\psi}_j(t) = \widetilde{\psi}_j(1) =
2 - 2q_j\sqrt{q_{j+1}} + q_j q_{j+1} = q_j(\sqrt{q_{j+1}} -1)^2 - q_j + 2.$$

If $q_j \geq 4$, then $q_{j+1} \geq 4$, and 
$$q_j(\sqrt{q_{j+1}} -1)^2 - q_j + 2 \geq q_j - q_j + 2>0. $$
If $q_j <4$, then, since $q_j \geq q_2 \geq 3,$ we have $(\sqrt{q_{j+1}} - 1)^2 
\geq (\sqrt{3} -1)^2 > 0{.}5.$
Therefore,  we get
$$q_j(\sqrt{q_{j+1}} -1)^2 - q_j + 2 > q_j \frac{1}{2} - q_j = q_j\bigg(-\frac{1}{2}\bigg) + 2 > 0.$$
Thus, $\widetilde{\psi}_j(t) > 0$ for $t \in [-1, 1]$.

Consequently, we have obtained the estimation from below:
\begin{eqnarray}
\label{estg}
& \min_{0\leq \theta\leq 2\pi}|\widetilde{g}_j(\rho_j e^{i\theta})| \geq q_2q_3^2 \ldots 
q_{j-2}^{j-3}q_{j-1}^{j-2}q_j^{j-2}q_{j+1}^{\frac{j-2}{2}} \cdot 
\\ \nonumber &  \bigg(2 - 2q_j\sqrt{q_{j+1}} 
+ q_jq_{j+1} \bigg).
\end{eqnarray}

Now we will estimate the modulus of $\Sigma_1$ from above. We have

$$|\Sigma_1(\rho_j e^{i \theta})|\leq  \sum_{k = 0}^{j-3} \frac{q_2^kq_3^k \cdot\ldots 
\cdot q_j^k q_{j+1}^{\frac{k}{2}}}{q_2^{k-1}q_3^{k-2} \cdot \ldots \cdot q_k}= (\mbox{we rewrite 
the sum from right} $$
$$ \mbox{  to left})\ = \left( q_2 q_3^2 \cdot\ldots \cdot q_{j-3}^{j-4}q_{j-2}^{j-3}q_{j-1}^{j-3} q_j^{j-3} 
q_{j+1}^{\frac{j-3}{2}} + \right.$$
$$\left. q_2 q_3^2 \cdot\ldots \cdot q_{j-4}^{j-5} q_{j-3}^{j-4}q_{j-2}^{j-4} 
q_{j-1}^{j-4} q_j^{j-4} q_{j+1}^{\frac{j-4}{2}}  \right. +$$
 $$\left.   q_2 q_3^2 \cdot\ldots \cdot q_{j-5}^{j-6} q_{j-4}^{j-5}q_{j-3}^{j-5}q_{j-2}^{j-5} 
 q_{j-1}^{j-5} q_j^{j-5} q_{j+1}^{\frac{j-5}{2}} +
 \cdots    \right) = $$
 $$q_2 q_3^2 \cdot\ldots \cdot q_{j-3}^{j-4}q_{j-2}^{j-3}q_{j-1}^{j-3} 
 q_j^{j-3} q_{j+1}^{\frac{j-3}{2}} \cdot  \left( 1+ 
 \frac{1}{q_{j-2}q_{j-1}q_{j}\sqrt{q_{j+1}}}\right. $$
$$\left.    +  \frac{1}{q_{j-3}q_{j-2}^2 q_{j-1}^2q_{j}^2(\sqrt{q_{j+1}})^2} +
 \ldots\right) \leq   $$
 $$q_2 q_3^2 \cdot\ldots \cdot q_{j-3}^{j-4}q_{j-2}^{j-3}q_{j-1}^{j-3} q_j^{j-3} 
 q_{j+1}^{\frac{j-3}{2}}\cdot 
 \frac{1}{1-\frac{1}{q_{j-2}q_{j-1}q_{j}\sqrt{q_{j+1}}}} $$
 (we estimate the finite sum  from above by the sum of the infinite geometric 
 progression). Finally, we obtain
\begin{equation}
|\Sigma_1(\rho_j e^{i \theta})|\leq q_2 q_3^2 \cdot\ldots \cdot q_{j-3}^{j-4}q_{j-2}^{j-3} 
q_{j-1}^{j-3} q_j^{j-3} q_{j+1}^{\frac{j-3}{2}}\cdot 
 \frac{1}{1-\frac{1}{q_{j-2}q_{j-1}q_{j}\sqrt{q_{j+1}}}} .
\end{equation}

The estimation of $|\Sigma_2(\rho_j e^{i\theta})|$ from above can be made analogously.
$$|\Sigma_2(\rho_j e^{i \theta})|\leq  \sum_{k = j+3}^\infty \frac{q_2^kq_3^k \cdot\ldots 
\cdot q_j^k q_{j+1}^{\frac{k}{2}}}{q_2^{k-1}q_3^{k-2} \cdot \ldots \cdot q_k}=$$
$$ \frac{q_2q_3^2 \cdot \ldots \cdot q_j^{j-1} q_{j+1}^{\frac{j-3}{2}}}{q_{j+2}^2q_{j+3}}
\cdot \bigg( 1 + \frac{1}{\sqrt{q_{j+1}}q_{j+2}q_{j+3}q_{j+4}} $$
$$+ \frac{1}{(\sqrt{q_{j+1}})^2 
q_{j+2}^2q_{j+3}^2q_{j+4}^2q_{j+5}} + \ldots \bigg).$$

The latter can be estimated from above by the sum of the geometric progression, so, we obtain
\begin{equation}
|\Sigma_2(\rho_j e^{i \theta})|\leq \frac{q_2q_3^2 \cdot \ldots \cdot q_j^{j-1} 
q_{j+1}^{\frac{j-3}{2}}}{q_{j+2}^2 q_{j+3}}
\cdot \frac{1}{1 - \frac{1}{\sqrt{q_{j+1}}q_{j+2} q_{j+3} q_{j+4}}}.
\end{equation}

Note that

$|\xi_j(\rho_j e^{i \theta})| =  q_2q_3^2 \ldots q_{j-2}^{j-3}q_{j-1}^{j-2}q_j^{j-2}
q_{j+1}^{\frac{j-2}{2}} \bigg(q_jq_{j+2}^{-1} - 1\bigg).$

The desired inequality $\min_{0\leq \theta\leq 2\pi}|\widetilde{g}_j(\rho_j e^{i\theta})| > 
\max_{0\leq \theta\leq 2\pi}|f_a (\rho_j e^{i\theta}) -\widetilde{g}_j(\rho_j e^{i\theta})|$ 
follows from

$$q_2q_3^2 \ldots q_{j-2}^{j-3}q_{j-1}^{j-2}q_j^{j-2}q_{j+1}^{\frac{j-2}{2}} \cdot  
\bigg(2 - 2q_j\sqrt{q_{j+1}} + q_jq_{j+1} \bigg) > $$
$$q_2 q_3^2 \cdot\ldots \cdot q_{j-3}^{j-4}q_{j-2}^{j-3} 
q_{j-1}^{j-3} q_j^{j-3} q_{j+1}^{\frac{j-3}{2}}\cdot 
 \frac{1}{1-\frac{1}{q_{j-2}q_{j-1}q_{j}\sqrt{q_{j+1}}}} +$$
 $$\frac{q_2q_3^2 \cdot \ldots \cdot q_j^{j-1} 
q_{j+1}^{\frac{j-3}{2}}}{q_{j+2}^2 q_{j+3}}
\cdot \frac{1}{1 - \frac{1}{\sqrt{q_{j+1}}q_{j+2} q_{j+3} q_{j+4}}} + $$
$$q_2q_3^2 \ldots q_{j-2}^{j-3}q_{j-1}^{j-2}q_j^{j-2}q_{j+1}^{\frac{j-2}{2}} \bigg(q_jq_{j+2}^{-1} - 1\bigg).$$

Equivalently, 

\begin{eqnarray}
\label{estqq}
& q_{j-1}q_j\sqrt{q_{j+1}} \bigg(2 - 2q_j\sqrt{q_{j+1}} + q_jq_{j+1}\bigg) >
\\  \nonumber &
\frac{1}{1 - \frac{1}{q_{j-2}q_{j-1}q_j\sqrt{q_{j+1}}}} + 
\frac{q_{j-1}q_j^2}{q_{j+2}^2q_{j+3}} \frac{1}{1 - \frac{1}{\sqrt{q_{j+1}}q_{j+2}q_{j+3}q_{j+4}}} +
\\  \nonumber & q_{j-1}q_j\sqrt{q_{j+1}}(q_jq_{j+2}^{-1} - 1).
\end{eqnarray}

Since, under our assumptions, $\lim_{j \to \infty} q_j = a$, we investigate first the limiting
inequality
\begin{equation}
\label{esta} a^2\sqrt{a}(2 - 2a\sqrt{a} + a^2) > \frac{1}{1 - \frac{1}{a^3\sqrt{a}}} + 
\frac{1}{1 - \frac{1}{a^3\sqrt{a}}} + a^2\sqrt{a} \cdot 0.
\end{equation}

Equivalently, 
$$2 - 2a\sqrt{a} + a^2> \frac{2a}{a^3\sqrt{a}-1}.$$

Let $\sqrt{a} =: b$, then we obtain  $(2 - 2b^3 + b^4)(b^7 - 1) > 2b^2,$
or
$$b^{11} - 2b^{10} + 2b^7 - b^4 + 2b^3 - 2b^2 - 2 > 0.$$

The inequality is fulfilled for $b > 1{.}47$, thus,  for $a > 2{.}17.$
Under our assumptions, $a > 3{.}57,$ so the inequality (\ref{esta})
is valid for our assumptions on $a.$ Whence, the inequality (\ref{estqq})
is valid for our assumptions on $a$ and for all $j$ being large enough.

Consequently, we have proved that for all $j$ being large enough 
$\min_{0\leq \theta\leq 2\pi}|\widetilde{g}_j(\rho_j e^{i\theta})| > 
\max_{0\leq \theta\leq 2\pi} |f_a(\rho_j e^{i\theta}) -\widetilde{g}_j(\rho_j e^{i\theta})|, $ 
so the numbers of zeros of 
$f_a$ in the circle $\{z:|z| < \rho_j\}$ is equal to the numbers of zeros of 
$\widetilde{g}_j$ in this circle.

It remains to find the number of zeros of $\widetilde{g}_j$ in the circle $\{z:|z| < \rho_j\}$. We have 

$$\widetilde{g}_j(z) = \sum_{k = j - 2}^{j+1} \frac{(-1)^kz^k}{q_2^{k-1} q_3^{k-2} \ldots q_k} + 
\frac{(-1)^{j+2}z^{j+2}}{q_2^{j+1}q_3^j \ldots q_{j-2}^5q_{j-1}^4q_j^4q_{j+1}^2}.$$

Let us use the denotation $w = z \rho_j^{-1},$ so that $|w|<1.$ This yields

$$\widetilde{g}_j(\rho_j w) = (-1)^{j-2}w^{j-2}q_2q_3^2 \ldots 
q_{j-2}^{j-3}q_{j-1}^{j-2}q_j^{j-2}q_{j+1}^{\frac{j-2}{2}} \cdot $$
$$(1 - q_j\sqrt{q_{j+1}}w + q_jq_{j+1}w^2 - q_j\sqrt{q_{j+1}}w^3 + w^4).$$

It follows from (\ref{estg}) that $\widetilde{g}_j$ does not have zeros on the circumference $\{z : |z| = \rho_j  \},$
whence $\widetilde{g}_j(\rho_j w)$ does not have zeros on the circumference $\{w : |w| = 1  \}.$ Since 
$P_j(w) =1-q_j\sqrt{q_{j+1}} w +q_j q_{j+1} w^2 - q_j\sqrt{q_{j+1}} w^3 +w^4 $  is a 
self-reciprocal polynomial on $w,$ we can conclude that $P_j$ has exactly two zeros in the circle $\{ w :|w| <1 \}.$
Hence, $\widetilde{g}_j(z)$ has exactly $j$ zeros in the circle $\{z : |z| < \rho_j  \},$ and we have 
proved the statement of  Lemma~\ref{th:lm3}.
\end{proof}

\begin{Lemma}
\label{th:lm6} Denote by $\rho_k(f_a) := q_2(f_a)q_3(f_a) \ldots q_k(f_a) \sqrt{q_{k+1}(f_a)},$ 
$k\in\mathbb{N}.$
If $a\geq 3$ then for every $k\geq 2$  the following inequality holds: $$(-1)^k f_a(\rho_k) \geq 0.$$
\end{Lemma}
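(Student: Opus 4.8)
As in the proof of Lemma~\ref{th:lm3}, I would write $f_a(z)=\sum_{n\ge 0}(-1)^n T_n(z)$ with $T_n(z)=z^n/(q_2^{n-1}q_3^{n-2}\cdots q_n)$, where $q_n=q_n(f_a)=(a^n+1)/(a^{n-1}+1)$; recall $(q_n)$ is strictly increasing and $1<q_n<a$. Fix $k\ge 2$, put $\rho:=\rho_k(f_a)=q_2\cdots q_k\sqrt{q_{k+1}}$ and $T_n:=T_n(\rho)>0$. The first step is to note $T_{n+1}/T_n=\rho/(q_2q_3\cdots q_{n+1})$. Since $\rho$ is the geometric mean of $q_2\cdots q_k$ and $q_2\cdots q_{k+1}$, this ratio is $>1$ for $n\le k-1$ and $<1$ for $n\ge k$; hence $(T_n)$ strictly increases up to $n=k$ and strictly decreases afterwards, and moreover $T_{k-1}=T_{k+1}=T_k/\sqrt{q_{k+1}}$.

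\textbf{Reorganising the series.} Setting $s_i:=T_{k-i}+T_{k+i}$ (with $T_m:=0$ for $m<0$), one has $(-1)^kf_a(\rho)=\sum_{n\ge 0}(-1)^{n-k}T_n=T_k+\sum_{i\ge 1}(-1)^i s_i$. By the unimodality above $(s_i)_{i\ge1}$ strictly decreases to $0$, so the alternating series $s_1-s_2+s_3-\cdots$ does not exceed its partial sum $s_1-s_2+s_3$, giving $(-1)^kf_a(\rho)\ge T_k-s_1+s_2-s_3$. Chaining the ratios from Step~1 yields $s_1/T_k=2q_{k+1}^{-1/2}$, $s_2/T_k=(q_kq_{k+1})^{-1}+(q_{k+1}q_{k+2})^{-1}$, and $s_3/T_k=(q_{k-1}q_k^2q_{k+1}^{3/2})^{-1}+(q_{k+1}^{3/2}q_{k+2}^2q_{k+3})^{-1}$ (for $k=2$ the term with $q_1$ is simply absent, which only decreases $s_3$). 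So it suffices to prove
$$\frac{2}{\sqrt{q_{k+1}}}-\frac{1}{q_kq_{k+1}}-\frac{1}{q_{k+1}q_{k+2}}+\frac{1}{q_{k-1}q_k^2q_{k+1}^{3/2}}+\frac{1}{q_{k+1}^{3/2}q_{k+2}^2q_{k+3}}\ \le\ 1 .$$

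\textbf{Reduction to one variable.} In the two negative terms I would use $q_k<a$, $q_{k+2}<a$; in the two positive terms $q_{k-1},q_k\ge q_2$ and $q_{k+1},q_{k+2},q_{k+3}\ge q_3$ (valid since $k\ge 2$); and the fact that $Q\mapsto 2Q^{-1/2}-2a^{-1}Q^{-1}$ decreases for $Q>4/a^2$, hence for $Q=q_{k+1}\ge q_3$. This bounds the left side by
$$\Phi(a):=\frac{2}{\sqrt{q_3}}-\frac{2}{a\,q_3}+\frac{1}{q_2^{\,3}q_3^{\,3/2}}+\frac{1}{q_3^{\,9/2}},\qquad q_2=\frac{a^2+1}{a+1},\ q_3=\frac{a^3+1}{a^2+1},$$
and the claim becomes $\Phi(a)\le 1$ for $a\ge 3$. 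Here $g(a):=2q_3^{-1/2}-2(aq_3)^{-1}$ is decreasing on $[3,\infty)$: $g'$ has the sign of $2q_3/a^2-q_3'(\sqrt{q_3}-2/a)$, which is negative because for $a\ge 3$ one has $q_3<a$, $\sqrt{q_3}-2/a>\sqrt{2.8}-2/3>1$, and $q_3'>1$ (the numerator of $q_3'-1$ is $a^2-2a-1\ge 2$). Hence $\Phi(a)\le g(3)+(2.5)^{-3}(2.8)^{-3/2}+(2.8)^{-9/2}=0.9571\ldots+0.0137\ldots+0.0097\ldots<1$, which finishes the proof.

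\textbf{Main obstacle.} The quantitative step is the delicate one. Naive truncations fail: $s_1/T_k=2/\sqrt{q_{k+1}}$ can exceed $1$ (for instance $q_3(f_3)=2.8$), so one must retain the terms through $s_3$, and — this is essential — keep the \emph{full} contribution of $s_2$, bounding $q_k,q_{k+2}$ by $a$ rather than discarding those terms. The resulting one‑variable inequality $\Phi(a)\le 1$ holds on $[3,\infty)$ only by about two percent at $a=3$, so the monotonicity of $g$ and the endpoint estimate must be handled with care. The only other point needing attention is the degenerate case $k=2$ (and partly $k=3$), where $q_1$ is undefined; there the missing term in $s_3$ only makes $s_3$ smaller, so the displayed bound, and hence the conclusion, still applies.
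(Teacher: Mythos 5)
Your proof is correct and follows essentially the same route as the paper: both arguments reduce $(-1)^k f_a(\rho_k)$ to the seven central terms $T_{k-3},\dots,T_{k+3}$ via the unimodality of $(T_n)$ and the alternating signs, and your displayed inequality is exactly the paper's $\nu_k(\rho_k)\ge 0$ divided by $T_k/T_{k-3}=q_{k-1}q_k^2q_{k+1}^{3/2}$. The only difference is the final elementary verification --- you majorize by a decreasing one-variable function $\Phi(a)$ with $\Phi(3)<1$, whereas the paper reduces to the polynomial inequality $t^5-2t^4+1.8t-\tfrac{2}{9}\ge 0$ in $t=\sqrt{q_{k+1}}$ --- which is a cosmetic distinction.
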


\begin{proof}

We use the denotations of $p_n,$  $q_n$ and $\rho_n$  instead of $p_n(f_a),$  $q_n(f_a)$ 
and $\rho_n(f_a).$ 
Then the function obtains the following form
$$f_a(z) = \sum_{j=0}^\infty \frac{(-1)^j z^j}{q_2^{j-1}q_3^{j-2} \ldots q_j},$$ 
where $q_2 < q_3 < \ldots$, $\lim\limits_{k \to \infty} q_k = a \geq 3.$

Since  $ \rho_k  \in (q_2q_3 \ldots q_k, q_2q_3 \ldots q_kq_{k+1}),$ we have 
$$ 1< \rho_k < \frac{\rho_k^2}{q_2} < \cdots < \frac{\rho_k^k}{q_2^{k-1}q_3^{k-2} \ldots q_k},  $$
and
$$ \frac{\rho_k^k}{q_2^{k-1}q_3^{k-2} \ldots q_k} > \frac{\rho_k^{k+1}}{q_2^{k}q_3^{k-1} \ldots q_k^2 q_{k+1}} 
> \frac{\rho_k^{k+2}}{q_2^{k+1}q_3^{k} \ldots q_k^3 q_{k+1}^2 q_{k+2}} > \cdots  .$$
Therefore, we get for $k\geq 2$
$$(-1)^kf_a(\rho_k) \geq \sum_{j=k-3}^{k+3}\frac{(-1)^{j+k}\rho_k^j}{q_2^{j-1} q_3^{j-2} \ldots q_j} =: \mu_k(\rho_k),$$
and it is sufficient to prove that for every $k\geq 2$ we have $\mu_k(\rho_k)\geq 0.$  
After reducing by $\frac{\rho_k^{k-3}}{q_2^{k-4} q_3^{k-5} \ldots q_{k-3}}$ we get 
the inequality we desired:
$$ -1 + \frac{\rho_k}{q_2q_3 \ldots q_{k-3}q_{k-2}} - \frac{\rho_k^2}{q_2^2q_3^2 
\ldots q_{k-2}^2q_{k-1}} + \frac{\rho_k^3}{q_2^3q_3^3 \ldots q_{k-2}^3q_{k-1}^2q_k} -$$
$$\frac{\rho_k^4}{q_2^4q_3^4 \ldots q_{k-2}^4q_{k-1}^3q_k^2q_{k+1}} + 
\frac{\rho_k^5}{q_2^5q_3^5 \ldots q_{k-2}^5q_{k-1}^4q_k^3q_{k+1}^2q_{k+2}} -$$
$$\frac{\rho_k^6}{q_2^6q_3^6 \ldots q_{k-2}^6q_{k-1}^5q_k^4q_{k+1}^3q_{k+2}^2q_{k+3}} 
\geq 0,$$
or, using that $\rho_k= q_2q_3 \ldots q_k \sqrt{q_{k+1}},$
$$\nu_k(\rho_k) := -1 + q_{k-1}q_k\sqrt{q_{k+1}} - 2q_{k-1}q_k^2q_{k+1} + 
q_{k-1}q_k^2q_{k+1}\sqrt{q_{k+1}}+$$
$$q_{k-1}q_k^2\sqrt{q_{k+1}}q_{k+2}^{-1} - 
q_{k-1}q_k^2q_{k+2}^{-2}q_{k+3}^{-1} \geq 0.$$

We rewrite and get 
$$\nu_k(\rho_k) = q_{k-1}q_k^2q_{k+1}\sqrt{q_{k+1}} - 2q_{k-1}q_k^2q_{k+1}+$$
$$q_{k-1}q_k\sqrt{q_{k+1}} \bigg( 1 + \frac{q_k}{q_{k+2}}\bigg) - \bigg(1 + 
\frac{q_{k-1}q_k^2}{q_{k+2}^2q_{k+3}}\bigg) \geq 0.$$

It is easy to check that  $\frac{q_k}{q_{k+2}}$ increases in $k$ to $1$, so we have 
$\frac{q_k}{q_{k+2}} \geq \frac{q_2}{q_4} = \frac{a^5 + a^3 + a^2 + 1}{a^5 + a^4 + a + 1} \geq 0{.}8$ 
for $a \geq 0.$

In addition, $\frac{q_{k-1}q_k^2}{q_{k+2}^2q_{k+3}} < 1$, so it is 
sufficient to prove the following inequality
$$q_{k-1}q_k^2q_{k+1}\sqrt{q_{k+1}} - 2q_{k-1}q_k^2q_{k+1}+
q_{k-1}q_k\sqrt{q_{k+1}} \cdot 1{.}8 - 2 \geq 0.$$

Since $2 < \frac{2}{9}q_{k-1}q_k$, we can observe that

$$q_{k-1}q_k^2q_{k+1}\sqrt{q_{k+1}} - 2q_{k-1}q_k^2q_{k+1}+
q_{k-1}q_k\sqrt{q_{k+1}} \cdot 1{.}8 - 2 \geq$$
$$q_{k-1}q_k^2q_{k+1}\sqrt{q_{k+1}} - 2q_{k-1}q_k^2q_{k+1}+
q_{k-1}q_k\sqrt{q_{k+1}} \cdot 1{.}8 - $$
$$\frac{2}{9}q_{k-1}q_k .$$

So, we need to check that for all $k\geq 2$ 
$$q_kq_{k+1}\sqrt{q_{k+1}} - 2q_kq_{k+1} + 1.8\sqrt{q_{k+1}} - \frac{2}{9}  =
q_kq_{k+1}\left(\sqrt{q_{k+1}} - 2\right) + 1{.}8\sqrt{q_{k+1}} - \frac{2}{9} \geq 0.$$

If $q_{k+1} \geq 4,$ then $\sqrt{q_{k+1}} - 2\ \geq 0$ and $1.8\sqrt{q_{k+1}} - 
\frac{2}{9} \geq 0,$ and the last inequality is valid. If $q_{k+1} < 4,$
since $q_k$ increases in $k$, we get
$$q_kq_{k+1}\sqrt{q_{k+1}} - 2q_kq_{k+1} + 1.8\sqrt{q_{k+1}} - \frac{2}{9} \geq$$
$$q_{k+1}^2\sqrt{q_{k+1}} - 2q_{k+1}^2 + 1{.}8\sqrt{q_k+1} - \frac{2}{9} \geq 0.$$

Let $\sqrt{q_{k+1}} = t, t \geq 0$, we obtain the following inequality
$$t^5 - 2t^4 + 1.8t - \frac{2}{9} \geq 0.$$
This inequality holds for $t \geq 1.57685 \ldots,$ so it follows that it holds
for $q_{k+1} \geq 2.48646\ldots$. 

Lemma~\ref{th:lm6} is proved. 
\end{proof}

Suppose that there  exists $z_0 \in (a+1, a^2+1),$ such that $f_a(z_0) \leq 0.$ Then,
by Lemma~\ref{th:lm6}  we have  for every $k\geq 2:$
$$  f_a(0) >0,  f_a(z_0) \leq 0, f_a(\rho_2) \geq 0, f_a(\rho_3) \leq 0, \ldots, 
(-1)^k f_a(\rho_k) \geq 0. $$
So, for every $k\geq 2$ the function $f_a$ has at least $k-1 $ real zeros in the circle 
$\{ z : |z| < \rho_k   \}.$ By Lemma~\ref{th:lm3} the function $f_a$  has exactly
$k$ zeros in the circle $\{ z : |z| < \rho_k   \}$ for $k$ being large enough. Thus, 
if there  exists $z_0 \in (a+1, a^2+1),$ such that $f_a(z_0) \leq 0,$ then all the
zeros of $f_a$ are real.

Theorem~\ref{th:mthm1} is proved.

\section{Proof of Theorem \ref{th:mthm2}}

In order to estimate from below the values of $a$  such that $f_a$  belongs to the 
Laguerre-P\'olya class, we consider its section $S_{3, a}(z) = 1 - \frac{z}{a+1} + \frac{z^2}{(a+1)(a^2+1)} + 
\frac{z^3}{(a+1)(a^2+1)(a^3+1)}.$

We have proved that if $f_a \in \mathcal{L-P}$, then there exists $z_0\in (a+1, a^2+1)$ such that 
$f_a(z_0) \leq 0.$ Note that, for every $z \in (a+1, a^2+1)$  we have $S_{3,a}(z) < f_a(z),$ whence
 $S_{3,a}(z_0) \leq 0.$

\begin{Lemma}
\label{th:lm4}
If there exists $z_0 \in (a+1, a^2+1)$ such that $S_{3,a}(z_0) \leq 0,$ then $a \geq 3{.}90155.$ 
\end{Lemma}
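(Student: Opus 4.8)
The plan is to analyze the cubic polynomial $S_{3,a}(z) = 1 - \frac{z}{a+1} + \frac{z^2}{(a+1)(a^2+1)} + \frac{z^3}{(a+1)(a^2+1)(a^3+1)}$ directly, looking for the threshold value of $a$ below which $S_{3,a}$ stays strictly positive on the whole interval $(a+1, a^2+1)$. First I would rescale: since we already know $S_{3,a}$ is positive on $[0, a+1]$ (the argument in the proof of Theorem~\ref{th:mthm1} via the monotonically decreasing terms applies verbatim to the section), and since $S_{3,a}(z) \to +\infty$ as $z \to +\infty$, the condition ``there exists $z_0 \in (a+1, a^2+1)$ with $S_{3,a}(z_0) \le 0$'' is equivalent to saying that the minimum of $S_{3,a}$ over $(a+1, a^2+1)$ is $\le 0$. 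A clean substitution is $z = (a+1)w$, turning the coefficients into $1, -1, \frac{1}{a^2+1}, \frac{1}{(a^2+1)(a^3+1)}$ times powers of $(a+1)$; but probably cleaner still is to work with the critical points of $S_{3,a}$ directly.

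The key steps, in order: (1) Compute $S_{3,a}'(z)$, a quadratic in $z$, and locate its roots; for $a$ in the relevant range $(\frac{3+\sqrt{17}}{2}, 2+\sqrt7)$ one checks $S_{3,a}'$ has two positive real roots $z_- < z_+$, with $z_-$ a local max and $z_+$ a local min (since the leading coefficient of $S_{3,a}$ is positive and of $S_{3,a}'$ is positive). (2) Verify that the local minimum $z_+$ lies inside $(a+1, a^2+1)$ for $a$ in this range — this uses $S_{3,a}'(a+1)$ and $S_{3,a}'(a^2+1)$ having appropriate signs, which reduces to polynomial inequalities in $a$. (3) The existence of $z_0$ with $S_{3,a}(z_0) \le 0$ is then equivalent to $S_{3,a}(z_+) \le 0$, i.e. the resultant/discriminant-type condition: express $S_{3,a}(z_+)$ by polynomial division of $S_{3,a}$ by $S_{3,a}'$ (so that $S_{3,a}(z_+)$ equals the remainder, a linear polynomial in $z_+$, evaluated at $z_+$), and then eliminate $z_+$ using the quadratic it satisfies. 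This produces a single polynomial inequality $\Delta(a) \ge 0$ in $a$ alone — essentially the condition that $S_{3,a}$ has a real root of multiplicity accounting, equivalently that the discriminant of the cubic $S_{3,a}$ is $\le 0$ (a cubic with positive leading coefficient and no root on $[0,a+1]$ has a real root in $(a+1,\infty)$ where it dips below zero precisely when its discriminant is non-positive, given it has a critical point there). (4) Solve $\Delta(a) = 0$ numerically to get the threshold $a^* \approx 3.90155$, and check the sign of $\Delta$ to conclude that $S_{3,a}$ can be made $\le 0$ on the interval only when $a \ge a^*$.

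Concretely, the cleanest route is via the discriminant of the cubic: write $S_{3,a}(z) = c_3 z^3 + c_2 z^2 + c_1 z + c_0$ with $c_3 = \frac{1}{(a+1)(a^2+1)(a^3+1)} > 0$, $c_2 = \frac{1}{(a+1)(a^2+1)} > 0$, $c_1 = -\frac{1}{a+1} < 0$, $c_0 = 1$. Its discriminant $\mathrm{disc} = 18 c_3 c_2 c_1 c_0 - 4 c_2^3 c_0 + c_2^2 c_1^2 - 4 c_3 c_1^3 - 27 c_3^2 c_0^2$, cleared of denominators, is a polynomial in $a$; a negative discriminant means exactly one real root, a positive one means three real roots (and since we know no root lies in $(-\infty, a+1]$, all real roots when they exist lie in $(a+1, \infty)$, forcing $S_{3,a}$ below zero somewhere there). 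One must also handle the boundary/double-root case $\mathrm{disc} = 0$ and check the root is genuinely in the open interval $(a+1, a^2+1)$ rather than beyond $a^2+1$; for this I would evaluate $S_{3,a}(a^2+1)$ and confirm its sign behavior. The threshold $3.90155$ is then the largest real root of the cleared discriminant polynomial in the range of interest.

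The main obstacle I expect is the bookkeeping in step (2)–(3): ensuring that the local minimum $z_+$ actually falls within the specified interval $(a+1, a^2+1)$ and not outside it, since the equivalence claimed in the lemma is with respect to that particular interval. If $z_+ > a^2+1$ for some $a < a^*$, the argument would need the additional check that $S_{3,a} > 0$ on all of $(a+1, a^2+1)$ anyway, which follows from monotonicity of $S_{3,a}$ up to $z_+$; and if $z_+ < a+1$, the section is increasing past $a+1$ and stays positive. So the delicate part is pinning down, by explicit polynomial inequalities in $a$, exactly where $z_+$ sits relative to $a+1$ and $a^2+1$ across the full range $a \in (\frac{3+\sqrt{17}}{2}, 2+\sqrt7)$, and then extracting the numerical threshold $3.90155$ from the resulting discriminant polynomial with enough precision to justify the stated digits.
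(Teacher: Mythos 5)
Your overall strategy---rescale, locate the critical points of the cubic section, show the relevant local minimum lies in the prescribed interval, demand that the value there be $\le 0$, and reduce to a one-variable polynomial inequality with threshold $3.90155$---is exactly the paper's (it substitutes $y=z/(a+1)$, writes $K(y)=1-y+\frac{y^2}{q_2}-\frac{y^3}{q_2^2q_3}$, shows the minimizer $y_1=\frac{bc-b\sqrt{c(c-3)}}{3}$ of $K$ lies in $(1,q_2)$ while the other critical point lies beyond $q_2$, and expands $K(y_1)\le 0$ into $a^8-8a^7+15a^6+12a^5-21a^4-28a^3-43a^2-40a-16\ge 0$). However, there is a genuine problem at the very start: you have the wrong sign on the cubic term. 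By the definition of the sections in Section 2, $S_{3,a}(z)=1-\frac{z}{a+1}+\frac{z^2}{(a+1)(a^2+1)}-\frac{z^3}{(a+1)(a^2+1)(a^3+1)}$; the display at the top of Section 3 contains a typo, and the inequality $S_{3,a}(z)<f_a(z)$ invoked just before the lemma only holds with the minus sign. With $c_3<0$ your qualitative picture inverts: $S_{3,a}(z)\to-\infty$ as $z\to+\infty$, the \emph{smaller} critical point is the local minimum (and it is the one that must be shown to lie in $(a+1,a^2+1)$), the larger one is a local maximum lying beyond $a^2+1$, and the cubic always has a real root in $(a+1,\infty)$, so your remark that real roots ``when they exist'' force the dip below zero loses its content (and is anyway false for your signed version, which always has a negative root since $c_0>0$ and $c_3>0$). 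Any discriminant computed from the $+$ sign is simply a different polynomial with a different threshold.

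Even after correcting the sign, the discriminant shortcut in your step (3) has a gap. For a cubic $p$ with critical points $r_1,r_2$ one has $\operatorname{disc}(p)=-27c_3^2\,p(r_1)p(r_2)$, so $\operatorname{disc}(p)\ge 0$ detects that the two critical values have \emph{opposite} signs (or that one vanishes); it is not equivalent to ``the local minimum value is $\le 0$''. The configuration in which both critical values are negative satisfies the lemma's hypothesis but has negative discriminant, so to use the discriminant as the criterion you must additionally prove that the local maximum value of $S_{3,a}$ stays positive for all $a$ in the working range $\left(\frac{3+\sqrt{17}}{2},\,2+\sqrt{7}\right)$. The paper sidesteps this by never forming the discriminant: it substitutes the explicit radical expression for $y_1$ into $K$, isolates $\sqrt{c(c-3)}$, verifies that $27-9bc+2bc^2\ge 0$ before squaring (so that the squaring step is reversible), and only then obtains the degree-$8$ inequality in $a$. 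Your step (2)---pinning down where the minimizer sits relative to $a+1$ and $a^2+1$---is indeed the delicate point, and it is precisely what the paper checks via $1<\frac{bc-b\sqrt{c(c-3)}}{3}<b$; but as written your proposal would be carrying out that check for the wrong critical point of the wrong polynomial.
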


\begin{proof} We  denote by $y_0:= \frac{z_0}{a+1}$, and get
$1 < y_0 < q_2.$ Hence we get 
$$S_{3,a}(z_0) =S_{3,a}((a+1)y_0) = 1 - y_0 + \frac{y_0^2}{q_2} - \frac{y_0^3}{q_2^2q_3}, 3 \leq q_2 < q_3.$$

Let us use the denotation $b:=q_2, c:= q_3$. Then we obtain
$$S_{3,a}((a+1)y_0) = 1 - y_0 + \frac{y_0^2}{b} - \frac{y_0^3}{b^2c}=: K(y_0).$$

First, we find the roots of the derivative. The derivative of $K(y) = 1 - y + \frac{y^2}{b} - \frac{y^3}{b^2c}$ is
$$K\textprime(y) = -\frac{1}{b^2c}(3y^2 - 2bc y + b^2c).$$

We consider the discriminant of the quadratic polynomial $K\textprime$: 
$D/4 = b^2c^2 - 3b^2c = b^2c (c - 3) >0$, since under our assumptions, $c > 3.$
Thus, the roots are 
$$y_1 = \frac{bc - b \sqrt{c(c-3)}}{3}$$
and
$$y_2 = \frac{bc + b \sqrt{c(c-3)}}{3}.$$

We want to check if $y_1$ or $y_2$ lie in $(1, b).$

\begin{equation}
\label{bc}
1 < \frac{bc - b \sqrt{c(c-3)}}{3} < b.
\end{equation}
The left-hand side of the inequality is 
$bc - 3 > b\sqrt{c(c-3)}$ or
$b^2c^2 - 6bc + 9 >  b^2c(c-3).$
The inequality is fulfilled under our assumptions when $b<c$: 
$b^2c - 2bc + 3 = bc(b-2) + 3 > 0.$

Now we consider the right-hand side of (\ref{bc}). It is equivalent to $bc - b \sqrt{c(c-3)} < 3b,$
or $c^2 - 6c + 9 < c^2 - 3c.$ Under our assumptions, $c - 3 > 0,$ so the inequality is fulfilled.

Thus, we have verified that $y_1 \in (1, b).$

Now we check that $y_2 > b,$ or
$$ \frac{bc + b \sqrt{c(c-3)}}{3} > b.$$
Equivalently, $c +  \sqrt{c(c-3)} > 3,$ which is true by our assumptions for $c.$

So we get that $y_1$ is the minimal point of $K(y)$ in the interval $1 < y < q_2.$
Thus, there exists $y_0,  1 < y_0 < q_2,$ such that $K(y_0) \leq 0,$ implies $K(y_1) \leq 0.$

After substituting $y_1$ into $K(y)$, we obtain the following expression
$$K(y_1) = 1 - \frac{bc - b\sqrt{c(c-3)}}{3} + \frac{(bc - b\sqrt{c(c-3)})^2}{9b} - 
\frac{(bc - b\sqrt{c(c-3)})^3}{27b^2c}.$$
We want $K(y_1) \leq 0,$  or 
$$27 - 9bc + 9b\sqrt{c(c-3)} + 3bc^2 - 6bc\sqrt{c(c-3)} + 3bc(c-3) - bc^2+$$
$$3bc\sqrt{c(c-3)} - 3bc(c-3) + b(c-3)\sqrt{c(c-3)} \leq 0.$$

We rewrite and get
\begin{equation}
\label{bc} \sqrt{c(c-3)}(6b - 2bc) + (27 - 9bc + 2bc^2) \leq 0.
\end{equation}

We have $6b - 2bc = 2b(3 - c) < 0$, since $c > 3$. Thus, $\sqrt{c(c - 3)}(6b - 2bc) < 0.$

Now we show that the following inequality is fulfilled
$$27 - 9bc + 2bc^2 \geq 0.$$

We substitute $b = q_2 = \frac{a^2 + 1}{a +1}, c = q_3 = \frac{a^3 + 1}{a^2 + 1}.$
We have
$$27 - 9\frac{a^2 + 1}{a +1} \cdot \frac{a^3 + 1}{a^2 + 1} + 2\frac{a^2 + 1}{a +1} \cdot 
\left(\frac{a^3 + 1}{a^2 + 1}\right)^2 \geq 0.$$

Equivalently,
$$2a^6 - 9a^5 + 22a^3 + 18a^2 + 27a + 20 \geq 0,$$
or
$$(a+1)^2(2a^4 - 13a^3 + 24a^2 - 13a + 20) \geq 0.$$
It remains to prove that
$$2a^4 - 13a^3 + 24a^2 - 13a + 20 \geq 0.$$

Since $a \geq 3$, let $a = 3+x, x \geq 0.$ We obtain
$$2(3+x)^4 - 13(3+x)^3 + 24(3+x)^2 - 13(3+x) + 20 \geq 0,$$
or
$$2x^4 + 11x^3 + 15x^2 - 4x + 8 \geq 0, $$
which is true for $x \geq 0.$

Consequently, the inequality $$27 - 9bc + 2bc^2 \geq 0$$ is verified.

Thus, we rewrite (\ref{bc}) in the form
$$27 - 9bc + 2bc^2 \leq \sqrt{c(c-3)}(2bc - 6b).$$

We can observe that both sides of the inequality are positive. After straightforward calculations we get
$$b^2c^2 - 4b^2c + 18bc - 4bc^2 - 27 \geq 0.$$

We substitute $b = q_2 = \frac{a^2 + 1}{a +1}, c = q_3 = \frac{a^3 + 1}{a^2 + 1}$ and obtain

$$\frac{(a^3+1)^2}{(a+1)^2} - 4\frac{(a^3+1)(a^2+1)}{(a+1)^2} + 
18\frac{a^3+1}{a+1} - 4 \frac{(a^3+1)^2}{(a+1)(a^2+1)} - 27 \geq 0,$$
Or, equivalently, 
$$a^8 - 8a^7 + 15a^6 + 12a^5 - 21a^4 - 28a^3 - 43a^2 - 40a - 16 \geq 0.$$
The last inequality is valid when
$$a \geq 3{.}90155\ldots. $$
Lemma~\ref{th:lm4} is proved. 
\end{proof}

For any $z_0 \in (1, q_2)$ and for any $n \in \mathbb{N}$: $S_{2n+1,a}(z_0) \leq f_a(z_0) 
\leq S_{2n,a}(z_0).$ So  if there exists $z_0\in (a+1, a^2+1)$ such that $S_{6,a}(z_0) \leq 0,$
then    $f_a(z_0) \leq 0.$

\begin{Lemma}
\label{th:lm5}
If $a \geq  3.91719,$ then there exists $z_0 \in (a+1, a^2+1)$ such that $S_{6,a}(z_0) \leq 0.$ 

\end{Lemma}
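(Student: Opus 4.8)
The plan is to mimic exactly the strategy used for Lemma~\ref{th:lm4}, but now with the degree-$6$ section $S_{6,a}$ in place of $S_{3,a}$. As before I would normalize by writing $y := z/(a+1)$, so that the target interval becomes $1 < y < q_2$, and set $b := q_2 = \frac{a^2+1}{a+1}$, $c := q_3 = \frac{a^3+1}{a^2+1}$, and similarly $d := q_4$, $e := q_5$, $f := q_6$ (all expressible as rational functions of $a$). In these variables
$$S_{6,a}((a+1)y) = 1 - y + \frac{y^2}{b} - \frac{y^3}{b^2c} + \frac{y^4}{b^3c^2d} - \frac{y^5}{b^4c^3d^2e} + \frac{y^6}{b^5c^4d^3e^2f} =: G(y).$$
Rather than minimizing $G$ over the whole interval (the critical equation is now quintic and intractable in closed form), I would instead exhibit a single explicit test point $y_0 = y_0(a) \in (1, q_2)$ and show $G(y_0) \leq 0$ for $a \geq 3.91719$. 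A natural candidate is the value that worked in the degree-$3$ analysis, namely $y_0 = y_1 = \frac{bc - b\sqrt{c(c-3)}}{3}$, which we already proved lies in $(1, b)$ under the standing assumptions; alternatively one may take a slightly simpler algebraic point such as $y_0 = \frac{3}{2}$ or $y_0 = \frac{b+1}{2}\cdot(\text{something})$ chosen to make the estimates clean. In either case one substitutes $b, c, d, e, f$ as rational functions of $a$, clears denominators, and reduces the inequality $G(y_0) \leq 0$ to a single polynomial inequality $P(a) \geq 0$ of bounded degree, which one verifies holds for $a \geq 3.91719$ (and fails just below, matching the constant in the statement).

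The key steps, in order, are: first, confirm that the chosen $y_0$ lies strictly in $(1, q_2)$ for all $a$ in the relevant range — for $y_0 = y_1$ this is already contained in the proof of Lemma~\ref{th:lm4}, so nothing new is needed. Second, write each of $q_2, q_3, q_4, q_5, q_6$ as $\frac{a^k+1}{a^{k-1}+1}$ and substitute into $G(y_0)$; the common denominator is a product of the form $(a+1)^{j_1}(a^2+1)^{j_2}\cdots(a^6+1)^{j_6}$, and after multiplying through one obtains a polynomial in $a$ with integer coefficients. Third, factor out the obvious positive factors (powers of $(a+1)$, $(a^2+1)$, etc., exactly as the $(a+1)^2$ factor appeared in Lemma~\ref{th:lm4}) and reduce to an irreducible polynomial inequality. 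Fourth, verify the reduced inequality on $[3.91719, \infty)$: substitute $a = 3.91719 + x$, $x \geq 0$, expand, and check that all coefficients of the resulting polynomial in $x$ are nonnegative (or, if a few are negative, bound them crudely as was done with the $-4x$ term in Lemma~\ref{th:lm4}).

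The main obstacle will be step three and four: the polynomial $P(a)$ obtained after clearing denominators has substantially higher degree than the degree-$8$ polynomial in Lemma~\ref{th:lm4} (roughly degree $36$–$40$, since the denominators involve $a^k$ up to $k=6$ with multiplicities up to $5$), so the "shift and check signs" verification is a genuine computation rather than a one-liner, and one must be careful that the threshold $3.91719$ is actually where $P$ changes sign. A secondary subtlety is the choice of $y_0$: if $y_1$ from Lemma~\ref{th:lm4} does not give a clean enough polynomial, one may need to optimize — e.g. take $y_0$ to be the real root nearest $q_2$ of a truncated cubic $1 - y + y^2/b - y^3/(b^2 c)$ and re-verify it lies in $(1, q_2)$ — which reintroduces the square root $\sqrt{c(c-3)}$ and forces the same "isolate the radical, square both sides, check signs of both sides first" maneuver that appears in Lemma~\ref{th:lm4}. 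I expect the cleanest route is to reuse $y_1$ verbatim, since the sign of $G(y_1)$ differs from the sign of $K(y_1)$ only by the four additional terms $\frac{y_1^4}{b^3c^2d} - \frac{y_1^5}{b^4c^3d^2e} + \frac{y_1^6}{b^5c^4d^3e^2f}$, and for $a$ near $3.9$ these are small and alternating, so the degree-$6$ section is only slightly more negative than the degree-$3$ section — pushing the threshold up from $3.90155$ to $3.91719$.
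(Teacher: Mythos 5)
Your overall strategy --- normalize by $y=z/(a+1)$, pick one explicit test point $y_0\in(1,q_2)$, substitute $q_j=\frac{a^j+1}{a^{j-1}+1}$, clear denominators and verify a single polynomial inequality in $a$ --- is exactly the paper's strategy. The paper takes the rational point $z_0=\frac{2}{3}(a+1)q_2=\frac{2}{3}(a^2+1)$, which gives $S_{6,a}(z_0)=1-\frac{2}{9}q_2-\frac{8}{27}\frac{q_2}{q_3}+\frac{16}{81}\frac{q_2}{q_3^2q_4}-\frac{32}{243}\frac{q_2}{q_3^3q_4^2q_5}+\frac{64}{729}\frac{q_2}{q_3^4q_4^3q_5^2q_6}$ and reduces to a degree-$20$ integer polynomial inequality valid precisely for $a\geq 3.91719$.

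The gap in your proposal is that the choice of test point is not the minor detail you treat it as, and you never commit to one or carry out the verification --- yet the constant $3.91719$ is nothing but the output of that computation. Since the added tail $\frac{y^4}{b^3c^2d}-\frac{y^5}{b^4c^3d^2e}+\frac{y^6}{b^5c^4d^3e^2f}$ is positive on $(1,q_2)$ (the terms decrease in modulus), $S_{6,a}$ is \emph{larger}, not ``more negative,'' than $S_{3,a}$ there --- your sign is backwards, though your conclusion that the threshold moves up is right. Because $G(y)$ dips only barely below zero near $a\approx 3.92$, the test point must sit close to the minimizer $y_1\approx 2.26$ of $K$; your fallback suggestion $y_0=\frac{3}{2}$ demonstrably fails (already $K(3/2)>0$ at $a=3.92$, and the tail only adds a positive quantity), so with that choice the lemma is not established. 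Your primary candidate, reusing the radical point $y_1=\frac{bc-b\sqrt{c(c-3)}}{3}$, would very likely yield a threshold at or below $3.91719$, but it reintroduces $\sqrt{c(c-3)}$ and forces the isolate-and-square manoeuvre on a much higher-degree expression; the paper avoids all of this by choosing the nearby rational proxy $\frac{2}{3}q_2$ for the minimizer. To close the gap you need to fix such a point explicitly and actually exhibit the resulting polynomial inequality together with its range of validity.
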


\begin{proof}  We choose $z_0 = \frac{2}{3}(a+1)q_2 \in (a+1, a^2+1).$ Then we get

$$S_{6,a}\bigg(\frac{2}{3}(a+1)q_2 \bigg) = 1 - \frac{2}{9}q_2 - \frac{8}{27}\frac{q_2}{q_3} + \frac{16}{81}\frac{q_2}{q_3^2q_4} -
\frac{32}{243}\frac{q_2}{q_3^3q_4^2q_5} + \frac{64}{729}\frac{q_2}{q_3^4q_4^3q_5^2q_6}.$$
We need the inequality $S_{6,a}(z_0) \leq 0$ to be fulfilled.

Now we rewrite the inequality using $q_j = \frac{a^j+1}{a^{j-1}+1}$ and after direct calculations we obtain the following:
$$729(a+1)(a^3+1)(a^4+1)(a^5+1)(a^6+1) - 162 (a^2+1)(a^3+1)(a^4+1)(a^5+1) \cdot $$
$$(a^6+1) - 216(a^2+1)^2(a^4+1)(a^5+1)(a^6+1) + 144(a^2+1)^3(a^5+1)(a^6+1) - $$
$$96(a^2+1)^4(a^5+1) + 64(a^2+1)^5 \leq 0.$$

Equivalently, 
$$-162a^{20} + 513a^{19} + 567a^{18} - 594a^{17} + 567a^{16} + 1134a^{15} + 918a^{14} + 822a^{13} +$$ 
$$846a^{12} + 228a^{11}+ 1927a^{10} + 1125a^9 + 1142a^8 + 750a^7 + 1030a^6 + 966a^5 +$$ 
$$1360a^4+567a^3-226a^2+729a+463 \leq 0.$$

The inequality above is valid for $a \geq 3.91719.$

Lemma~\ref{th:lm5} is proved. 
\end{proof}

Theorem \ref{th:mthm2} is proved.

{\bf Aknowledgement.} This article was partially supported by the Akhiezer Foundation. The author is also deeply grateful to her scientific advisor, A. Vishnyakova, for her patience, support and contribution to her research career.

\end{document}